\newtheorem{theorem}{Theorem}[section]
\newtheorem{lemma}[theorem]{Lemma}
\newtheorem{proposition}[theorem]{Proposition}
\theoremstyle{definition}
\newtheorem{claim}{}[theorem]
\newcommand{\N}{\mathbb N}
\newcommand{\Z}{\mathbb Z}
\DeclareMathOperator{\ord}{ord}
\DeclareMathOperator{\supp}{supp}
\newcommand{\la}{\langle}
\newcommand{\ra}{\rangle}
\newcommand{\be}{\begin{equation}}
\newcommand{\ee}{\end{equation}}
\newcommand{\und}{\;\mbox{ and }\;}
\newcommand{\nn}{\nonumber}
\newcommand{\ber}{\begin{eqnarray}}
\newcommand{\eer}{\end{eqnarray}}
\newcommand{\Sum}[2]{\underset{#1}{\overset{#2}{\sum}}}
\newcommand{\vp}{\mathsf v}
\renewcommand{\t}{\, | \,}
\numberwithin{equation}{section}
\subjclass[2010]{11B30, 11P70}
\begin{document}

\title{On the inverse  problem of the $k$-th Davenport constants for groups of rank  $2$}

\author{Qinghai Zhong}

\address{University of Graz, NAWI Graz,
	Department of Mathematics and Scientific Computing \\
	Heinrichstra{\ss}e 36\\
	8010 Graz, Austria}
\email{qinghai.zhong@uni-graz.at}
\urladdr{https://imsc.uni-graz.at/zhong/}

\thanks{This work was supported by the Austrian Science Fund FWF [grant DOI:10.55776/P36852]}

\keywords{zero-sum sequences, Davenport constant, $k$-th Davenport constant}

\begin{abstract}
 For a finite abelian group $G$ and a positive integer $k$, let $\mathsf{D}_k(G)$ denote the smallest integer $\ell$ such that each sequence over $G$ of length at least $\ell$ has $k$ disjoint nontrivial zero-sum subsequences. It is known that $\mathsf D_k(G)=n_1+kn_2-1$ if $G\cong C_{n_1}\oplus C_{n_2}$ is a rank $2$ group, where $1<n_1\t n_2$. We investigate the associated inverse problem for rank $2$ groups, that is,  characterizing the structure of  zero-sum sequences of length $\mathsf D_k(G)$ that can not be partitioned into $k+1$ nontrivial zero-sum subsequences.
\end{abstract}

\maketitle

\section{Introduction}

Let $(G, +, 0)$ be a finite abelian group. By a sequence $S$
over $G$, we mean a finite sequence of terms from $G$ which is unordered, repetition of terms allowed. We say that $S$ is a zero-sum sequence if the sum of its terms equals zero and denote by $|S|$ the length of the sequence.

Let $k$ be a positive integer. 
We denote by $\mathsf{D}_k(G)$ the smallest integer $\ell$ such that every sequence over $G$
of length at least $\ell$ has $k$ disjoint nontrivial zero-sum subsequences.  We call $\mathsf{D}_k(G)$ the $k$-th Davenport constant of $G$, while the Davenport constant $\mathsf D(G)=\mathsf D_1(G)$ is one of the most important zero-sum invariants in Combinatorial Number Theory and,  together with Erd\H os-Ginzburg-Ziv constant,  $\eta$-constant, etc., has been studied a lot (see \cite{Ho-Zh24,Na20,Ad-Gr-Su12,Gi18, Si20,Olson-rk2,Gi-Sc19a,Fa-Ga-Zh11a,Ga-Ha-Pe-Su14,Li20,Bi-Gr-He20,Qu-Li-Te22,Br-Ri18a,Ed-El-Ge-Ku-Ra07,Ca96a,Ga-Lu08a,Ha-Zh19}). 
This variant $\mathsf D_k(G)$ of the Davenport constant was introduced and investigated by F.~Halter-Koch \cite{halterkoch92}, in the context of investigations on the asymptotic behavior of certain counting functions of algebraic integers defined via factorization properties ( see the monograph \cite[Section 6.1]{Ger-book},  and the survey article \cite[Section 5]{gaogeroldingersurvey}).
In 2014, K. Cziszter and M. Domokos (\cite{Cz-Do14,Cz-Do13c}) introduced the generalized Noether Number $\beta_k(G)$ for general groups, which equals $\mathsf D_k(G)$ when $G$ is abelian (see \cite{Cz-Do-Ge16,Cz-Do-Sz17,Cz-Do14a} for more about this direction).
Knowledge of those constants is highly relevant when applying the inductive method to determine or estimate the Davenport constant of certain finite abelian groups (see \cite{delormeetal01,bhowmikschalge07,bhowmikhallschlage,Pl-Sc11}).

In 2010, M. Freeze and W. Schmid (\cite{Fr-Sc10}) showed that for each finite abelian group $G$ we have $\mathsf{D}_k(G)= \mathsf{D}_0(G)+ k\exp(G)$ for some $\mathsf{D}_0(G)\in \mathbb{N}_0$ and all sufficiently large $k$.
In fact, it is known that for groups of rank at most two, and for some other types of groups, an equality of the form $\mathsf{D}_k(G)= \mathsf{D}_0(G)+ k\exp(G)$ for some $\mathsf{D}_0(G)\in \mathbb{N}_0$ holds for all $k$. In particular, for a rank two abelian group $G=C_m\oplus C_n$, where $m\mid n$, we have $\mathsf D_k(G)=m+kn-1$ (\cite[Theorem 6.1.5]{Ger-book}). 
Yet, it fails for elementary $2$ and $3$-groups of rank at least $3$ (see \cite{delormeetal01,bhowmikschalge07}). In general, computing (even bounding) $\mathsf D_k(G)$ is quite more complicated than for $\mathsf D(G)$, in particular for (elementary) $p$-groups, while $\mathsf D(G)$ is know for $p$-groups.

In  zero-sum theory, the associated inverse problems of zero-sum invariants  study the structure of extremal sequences that do not have enough zero-sum subsequences with the prescribed properties. The inverse problems of the Davenport constant, the $\eta$-constant, and the Erd\H os-Ginzburg-Ziv constant are central topics (see \cite{Yu07a,Yu-Ze11b,Sa-Ch07a,Sa-Ch12a,Ga-Pe-Wa11a,GeF21a,Eb-Gr24a,Gr-Li22a, Gr-Li22b,Sc10b,Sc12a,Gi-Sc19b,Gr-uzi}). 
The associated inverse problem of $\mathsf D_k(G)$ is to characterize the maximal length zero-sum sequences that can not be partitioned into $k+1$ nontrivial zero-sum subsequences. In particular, the inverse problem of $\mathsf D(G)$ is to characterize the structure of minimal zero-sum subsequences of length $\mathsf D(G)$, which was accomplished for groups of rank $2$ in a series of papers \cite{Reiher-propB-thesis} \cite{Gao-Ger-propB} \cite{propB-GGG} \cite{Sc10b} \cite{Schlage-case9-propB},
   where a minimal zero-sum sequence is a zero-sum sequence that can not be partitioned into two nontrivial zero-sum subsequences. Those inverse results can be used to construct minimal  generating subsets in Invariant Theory (see \cite[Proposition 4.7]{Cz-Do-Ge16}).

    Let $\mathcal B(G)$ be the set of all zero-sum sequences over $G$. We define
   \[
   \mathcal M_k(G)=\{S\in \mathcal B(G)\colon S \text{ can not be partitioned into $k+1$ nontrivial zero-sum subsequences}\}\,.
   \]
   Then it is easy to see that $\mathsf D_k(G)=\max \{|S|\colon S\in \mathcal M_k(G)\}$.
   In this paper, we investigate the inverse problem of general Davenport constant $\mathsf D_k(G)$ for all rank $2$ groups, that is, to study the structure of sequences of $\mathcal M_k(G)$ of length $\mathsf D_k(G)$. In 2003, Gao and Geroldinger (\cite[Theorem 7.1]{Gao-Ger-propB}) studied the inverse problem of $\mathsf D_k(G)$ for $G=C_n\oplus C_n$ under some assumptions of $G$, which had been confirmed later.  We reformulate this result in the following  and  a proof  will be given in Section 3.
   
\begin{theorem}\label{main1}
	Let  $G = C_{n} \oplus C_{n}$  with $n\ge 2$, let $k\ge 1$, and let $U\in \mathcal B(G)$ with $|U|=\mathsf D_k(G)$. Then  $U \in \mathcal M_k(G)$ 
	if and only if there exists a basis $(e_1,e_2)$ of $G$ such that it has one of the following two forms.
	\begin{itemize}
		\smallskip
		\item[(I)] \[
		U = e_1^{k_1n-1}  \prod_{i\in [1,k_2n]} (x_{i}e_1+e_2), \quad \text{where}
		\]
		\begin{itemize}
			\item[(a)] $k_1,k_2\in \N$ with $k_1+k_2=k+1$,
			\item[(b)] $x_1, \ldots, x_{k_2n}  \in [0, n-1]$ and $x_1 + \ldots + x_{k_2n} \equiv 1 \mod n$.
		\end{itemize}
	\end{itemize}

	\begin{itemize}
			\item[(II)] 
		\[
		U=e_1^{an} e_2^{bn-1} (xe_1+e_2)^{cn-1}  (xe_1+2e_2)\,, \quad \text{where}
		\]
		\begin{itemize}
			\item[(a)] $x\in [2,n-2]$ with $\gcd(x,n)=1$,
			\item[(b)] $a,b,c\geq 1$ with $a+b+c=k+1$.
		\end{itemize}
	
	Note that in this case, we have $k\ge 2$.
	\end{itemize}
\end{theorem}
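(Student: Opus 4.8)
The statement is an ``if and only if'', so I would prove the two implications separately, with the backward (classification) direction carrying essentially all of the difficulty.

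\textbf{Forward direction (structure $\Rightarrow$ $U\in\mathcal M_k(G)$).} For a sequence of the form (I) or (II) I would show directly that every partition into nontrivial zero-sum subsequences has at most $k$ blocks, so that $U$ cannot be split into $k+1$ of them. The tool is the coordinate relative to $e_2$: reading each term modulo $\langle e_1\rangle$, every zero-sum block must use a number of terms of nonzero $e_2$-coordinate that is $\equiv 0 \pmod n$. In case (I) this forces each block to be either a ``pure'' block consuming $n$ copies of $e_1$, or a ``mixed'' block consuming a positive multiple of $n$ of the terms $x_ie_1+e_2$; since there are only $k_2n$ of the latter and only $k_1n-1<k_1n$ copies of $e_1$, a short count yields at most $(k_1-1)+k_2=k$ blocks. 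Case (II) runs on the same idea but with three ``$e_2$-levels'' (coming from $e_2$, $xe_1+e_2$, and $xe_1+2e_2$), where $\gcd(x,n)=1$ makes the first coordinate rigid and forces the same bound $k$. This direction is a finite verification, and I expect no real obstacle.

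\textbf{Classification direction, set-up.} First I would translate $U\in\mathcal M_k(G)$ into factorization language in the monoid $\mathcal B(G)$: partitioning $U$ into $k+1$ nontrivial zero-sum subsequences is possible precisely when some factorization of $U$ into atoms (minimal zero-sum sequences) has at least $k+1$ factors, so $U\in\mathcal M_k(G)$ is equivalent to $\max\mathsf L(U)\le k$, where $\mathsf L(U)$ denotes the set of factorization lengths. Combining $|U|=\mathsf D_k(G)=(k+1)n-1$ with the fact that every atom has length at most $\mathsf D(G)=2n-1$, a maximal factorization $U=A_1\cdots A_t$ with $t\le k$ satisfies $\sum_i(|A_i|-n)=(k+1-t)n-1\ge n-1$, so the atoms are long on average and at least one has length $>n$. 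I would then induct on $k$. The base case $k=1$ gives $|U|=2n-1=\mathsf D(G)$ with $U$ itself an atom, whence the inverse theorem for the Davenport constant of rank-two groups (``Property B'', now established for all $C_n\oplus C_n$) produces exactly form (I) with $k_1=k_2=1$; form (II) cannot occur since it needs $a+b+c=k+1\ge 3$. For the step I would locate an atom $W\mid U$ of length exactly $n$: then $U'=UW^{-1}$ has length $kn-1=\mathsf D_{k-1}(G)$, and since prepending $W$ to any factorization of $U'$ yields one of $U$ we get $\max\mathsf L(U')\le k-1$, i.e. $U'\in\mathcal M_{k-1}(G)$, so the inductive hypothesis classifies $U'$. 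The existence of such a $W$ is plausible because the long atom forced above, analysed through Property B, carries many repeated terms (a high-multiplicity $e_1$, or an Erd\H os--Ginzburg--Ziv configuration among the terms of $e_2$-coordinate $1$), from which a length-$n$ zero-sum block can be extracted and certified as an atom by the $e_2$-coordinate count.

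\textbf{The main obstacle.} I expect the genuine difficulty to lie in the reconstruction step of gluing $W$ back. The basis supplied by the inductive hypothesis for $U'$ need not be the one in which $W$ looks standard, and, more seriously, the two normal forms truly mix under this operation: removing $e_1^n$ from a form-(II) sequence with $a=1$ becomes, after the change of basis $(e_1,e_2)\mapsto(xe_1+e_2,\,e_2)$, a sequence of form (I), so conversely the construction of form (II) requires recognizing when a form-(I) sequence for $k-1$ must be ``upgraded''. Thus the bookkeeping must simultaneously (i) align the basis of $U'$ with $W$, (ii) decide which of the two forms results, and (iii) exclude spurious sequences meeting the length and $\max\mathsf L\le k$ conditions but neither (I) nor (II); the last is where one must exploit the rigidity of $\gcd(x,n)=1$ and the sharpness of Property B. A lower-effort alternative is to invoke Gao--Geroldinger's conditional classification and discharge its hypothesis using the now-complete proof of Property B, but the inductive argument above is the route I would take to keep the paper self-contained.
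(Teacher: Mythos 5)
Your forward direction is correct and is in substance the paper's own: the paper routes it through Proposition \ref{pr-key}, showing $0\notin\Sigma_{\le n-1}(U)$ (for type (II) by exactly the computation you sketch, using $\gcd(x,n)=1$), after which any partition into nontrivial zero-sum blocks has every block of length at least $n$, impossible since $|U|=(k+1)n-1<(k+1)n$; your block count is an equivalent formulation. The gap is in your classification direction, and it sits precisely where you yourself flag ``the main obstacle.'' Your induction on $k$ is left as a plan at the two points carrying all the content. First, the existence of a zero-sum subsequence $W$ of length exactly $n$ is only asserted as ``plausible''; it does not follow from the existence of a long atom. It can be rescued, but only by importing a nontrivial external input: every sequence over $C_n\oplus C_n$ of length $3n-2$ has a zero-sum subsequence of length $n$ or $2n$ (\cite[Theorem 6.7.2]{gaogeroldingersurvey}), applicable since $|U|=(k+1)n-1\ge 3n-2$ for $k\ge 2$, together with the observation that a length-$2n$ zero-sum subsequence cannot be an atom (as $\mathsf D(G)=2n-1$) and hence, by $0\notin\Sigma_{\le n-1}(U)$, splits into two zero-sums of length exactly $n$. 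Second, and decisively, the gluing step is not an exercise in bookkeeping: the inductive hypothesis describes $U'=W^{-1}U$ with respect to some basis but constrains the $n$ terms of $W$ only through the global condition $0\notin\Sigma_{\le n-1}(U)$, and extracting from that condition the coset structure of $\supp(W)$, the basis alignment, and the form-(I)-to-form-(II) upgrades you correctly identify is essentially the entire proof of \cite[Theorem 7.1]{Gao-Ger-propB}. As written, the hard implication is therefore unproved.

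The ``lower-effort alternative'' you mention in your last sentence is in fact the paper's actual proof, with one correction to your description of it: applying \cite[Theorem 7.1]{Gao-Ger-propB} requires discharging two hypotheses, not one. Besides Property B, now unconditional by Girard's result \cite{Gi18}, one also needs the length-$(3n-2)$ statement from \cite[Theorem 6.7.2]{gaogeroldingersurvey} quoted above --- the same result your atom-extraction step silently requires. So the honest options are: cite the conditional Gao--Geroldinger theorem with both hypotheses verified (the paper's route, a few lines), or carry out your induction in full, which amounts to redoing the reconstruction analysis of \cite{Gao-Ger-propB} rather than avoiding it; the self-containedness you hope to gain is largely illusory, since both the base case $k=1$ and the extraction of $W$ already lean on the same deep external inputs.
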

   
 For general groups, we have the following  main theorem. 
   
\begin{theorem}\label{main}
	Let  $G = C_{n_1} \oplus C_{n_2}$  with $1 < n_1 \mid n_2$ and $n_1<n_2$, let $k\ge 1$, and let $U\in \mathcal B(G)$ with $|U|=\mathsf D_k(G)$. Then  $U \in \mathcal M_k(G)$ 
	if and only if it has one of the following four forms.
	\begin{itemize}
		\smallskip
		\item[(I)] \[
		U = e_1^{\ord (e_1)-1}  \prod_{i\in [1,k\ord (e_2)]} (x_{i}e_1+e_2), \quad \text{where}
		\]
		\begin{itemize}
			\item[(a)] $(e_1, e_2)$ is a basis for $G$ with $\ord(e_1)=n_1$ and $\ord(e_2)=n_2$,
			\item[(b)] $x_1, \ldots, x_{k\ord (e_2)}  \in [0, \ord (e_1)-1]$ and $x_1 + \ldots + x_{k\ord (e_2)} \equiv 1 \mod \ord (e_1)$.
		\end{itemize}
	\end{itemize}

	\begin{itemize}
		\smallskip
		\item[(II)] \[
		U = e_1^{k\ord (e_1)-1}  \prod_{i\in [1,\ord (e_2)]} (x_{i}e_1+e_2), \quad \text{where}
		\]
		\begin{itemize}
			\item[(a)] $(e_1, e_2)$ is a basis for $G$ with $\ord(e_1)=n_2$ and $\ord(e_2)=n_1$,
			\item[(b)] $x_1, \ldots, x_{\ord (e_2)}  \in [0, \ord (e_1)-1]$ and $x_1 + \ldots + x_{\ord (e_2)} \equiv 1 \mod \ord (e_1)$.
		\end{itemize}
	\end{itemize}

	\begin{itemize}
		\item[(III)] \[
		U = g_1^{n_1 - 1}   \prod_{i\in [1,kn_2]} ( -x_{i} g_1 +g_2) \,, \quad \text{where}
		\]
		\begin{itemize}
			\item[(a)] $(g_1, g_2)$ is a generating set of $G$ with $\ord(g_1)>n_1$ and $\ord (g_2) = n_2$,
			\item[(b)] $x_1, \ldots, x_{kn_2} \in [0, n_1-1]$ with $x_1 + \ldots + x_{kn_2} = n_1-1$.
		\end{itemize}
	\end{itemize}

	\begin{itemize}
		\item[(IV)]
		\[
		U = e_1^{s n_1 - 1} \prod_{i\in [1,kn_2 -(s-1)n_1]} ( (1-x_{i}) e_1+e_2) \,, \quad \text{where}
		\]
		\begin{itemize}
			\item[(a)] $(e_1, e_2)$ is a basis of $G$ with $\ord(e_1)=n_2$ and $\ord (e_2) = n_1$,
			\item[(b)] $s \in [2, kn_2/n_1 -1]$,
			\item[(c)]$x_1, \ldots, x_{kn_2 -(s-1)n_1} \in [0, n_1-1]$ with $x_1 + \ldots + x_{kn_2 -(s-1)n_1} = n_1-1$.
		\end{itemize}
	\end{itemize}

\end{theorem}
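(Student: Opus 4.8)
The proof separates into the (comparatively routine) \emph{sufficiency}, that each displayed sequence lies in $\mathcal{M}_k(G)$ and has length $\mathsf{D}_k(G)$, and the \emph{necessity}, that these four families exhaust the extremal sequences.

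For the sufficiency I would, for each form, first confirm by a direct coordinate computation that the sequence is zero-sum --- exactly what the conditions in part~(b) encode --- and that its length is $n_1+kn_2-1=\mathsf{D}_k(G)$. Membership in $\mathcal{M}_k(G)$ means that no $k+1$ pairwise disjoint nontrivial zero-sum subsequences exist. For form (I) this is a clean count: under the projection $G\to G/\langle e_1\rangle\cong C_{n_2}$, the $kn_2$ non-$e_1$ terms all map to a fixed generator, so each nontrivial zero-sum subsequence must contain a positive multiple of $n_2$ of them (a pure power of $e_1$ cannot be zero-sum, as only $n_1-1<n_1$ copies are present), capping the number of parts at $k$. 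Form (II) is dual: now only $n_1$ terms map to a generator (of $C_{n_1}$), so at most one part contains such terms, while the $kn_2-1$ copies of $e_1$ yield at most $k-1$ further pure-$e_1$ parts, again totaling $k$. Forms (III) and (IV) are genuinely more delicate: there the generic terms share no common image under any single cyclic quotient --- in (III) the pair $(g_1,g_2)$ is merely a generating set, and no homomorphism $G\to C_{n_2}$ kills $g_1$ --- so one must argue in both cyclic directions at once and use the exact equality $\sum_i x_i=n_1-1$ to see that there are just enough terms aligned with the $\exp(G)$-order direction to form $k$ parts but not $k+1$.

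For the necessity I would induct on $k$. The engine is that removing any indecomposable piece drops the level cleanly: if $U\in\mathcal{M}_k(G)$ with $|U|=\mathsf{D}_k(G)$ and $W\mid U$ is a minimal nontrivial zero-sum subsequence, then $UW^{-1}\in\mathcal{M}_{k-1}(G)$, for otherwise a splitting of $UW^{-1}$ into $k$ parts would give, together with $W$, a splitting of $U$ into $k+1$. Since $\mathsf{D}_{k-1}(G)=\mathsf{D}_k(G)-n_2$, this forces $|W|\ge n_2=\exp(G)$; that is, \emph{every} minimal zero-sum subsequence of an extremal $U$ has length at least $\exp(G)$. When some minimal zero-sum subsequence $W$ attains $|W|=n_2$, the complement $V=UW^{-1}$ is an extremal element of $\mathcal{M}_{k-1}(G)$, hence by the induction hypothesis one of the forms (I)--(IV) with $k$ replaced by $k-1$, and it remains to reinsert $W$. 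The base case $k=1$ is the inverse problem for the ordinary Davenport constant, whose solution (the rank-two analogue of Property~B, see \cite{Gao-Ger-propB}) is precisely the $k=1$ instance of (I)--(IV).

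The reinsertion step carries the real weight, and it is the step I expect to be the main obstacle. Writing $V$ in the basis or generating set furnished by the induction hypothesis, I must pin down how the $n_2$ terms of $W$ sit relative to that structure. The length bound just obtained is the key lever: in the forms whose dominant element $e_1$ has order $n_1<\exp(G)$, its multiplicity in $U$ cannot reach $n_1$ (else $e_1^{n_1}$ would be a zero-sum subsequence of length $n_1<n_2$), so $W$ cannot use $e_1$ at all, and similar constraints --- now tracking the $\exp(G)$-order direction --- govern the remaining forms. The genuine difficulty is that, because being zero-sum in $C_{n_1}\oplus C_{n_2}$ means vanishing in both cyclic directions simultaneously, determining which configurations of $W$ survive without creating an extra zero-sum part is a two-dimensional case analysis rather than a single count; moreover one must keep the four target forms and the transitions among them under simultaneous control, since a single $V$ may in principle glue to $U$ of several forms. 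It is exactly here that forms (III) and (IV), which are absent from the equal-rank Theorem~\ref{main1}, emerge, together with the generating-set (non-basis) description in (III) and the auxiliary parameter $s$ in (IV). Finally, one must separately dispose of the degenerate case in which $U$ has \emph{no} minimal zero-sum subsequence of length exactly $n_2$ (possible only for small $k$), where the clean reduction does not apply: there I would fix a decomposition $U=W_1\cdots W_m$ into a maximal number $m\le k$ of minimal zero-sum subsequences and exploit $\sum_i|W_i|=n_1+kn_2-1$ with each $|W_i|\ge n_2$, which tightly constrains the part lengths and leaves only a few configurations to treat by hand. As a consistency check, and an alternative route to the base case, one may embed $G\hookrightarrow C_{n_2}\oplus C_{n_2}$ (valid since $n_1\mid n_2$), extend $U$ within $\mathcal{M}_k$ to length $\mathsf{D}_k(C_{n_2}\oplus C_{n_2})$, and read off the admissible restrictions from Theorem~\ref{main1}.
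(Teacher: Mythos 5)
Your necessity argument contains the genuine gap, and you have in effect flagged it yourself. The induction scheme is sound as far as it goes: removing a minimal zero-sum subsequence $W$ from $U\in\mathcal M_k(G)$ gives $UW^{-1}\in\mathcal M_{k-1}(G)$ and forces $|W|\ge n_2$ (this is Proposition \ref{pr-key} in disguise), and the base case $k=1$ is indeed the known inverse Davenport result --- though note that \cite{Gao-Ger-propB} covers only $C_n\oplus C_n$; for $n_1<n_2$ you need the full rank-two characterization (Theorem \ref{inverse}, see \cite{Ge-Gr-Yu15}). But the two steps carrying all the content are left undone. The reinsertion step is only named, never performed: the terms of $W$ are a priori arbitrary elements of $G$, the structure of $V=UW^{-1}$ may be realized with respect to several bases or generating sets, and aligning $W$ with one of them while controlling the transitions among the four types (e.g.\ removing $e_1^{n_2}$ from a type IV sequence shifts $s$ by $n_2/n_1$ and can cross into type III) is precisely where the whole difficulty of the theorem sits; no mechanism for it is supplied. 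Worse, your degenerate case is not confined to small $k$: all that is known is that every minimal zero-sum part has length in $[n_2,\,n_1+n_2-1]$, so a partition into $m\le k$ minimal parts only yields $mn_2\le kn_2+n_1-1\le m(n_1+n_2-1)$. For $n_1=2$, $n_2=4$, $k=6$ one has $|U|=25$, and a priori all parts could have length $5$; minimal zero-sum sequences of length strictly between $\exp(G)$ and $\mathsf D(G)$ are not structurally characterized by any known inverse theorem, so ``treat by hand'' has no support. Finally, the proposed consistency check fails in principle: one can quotient (Lemma \ref{le-epi}) but not embed --- there is no argument that an extremal $U\in\mathcal M_k(G)$ extends to an extremal element of $\mathcal M_k(C_{n_2}\oplus C_{n_2})$, since $\mathcal M_k$-membership is not preserved under padding.

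For contrast, the paper avoids induction on $k$ altogether. For $k\ge 2$ it fixes a homomorphism $\varphi$ with $\mathrm{im}\,\varphi\cong C_{n_1}\oplus C_{n_1}$ and $\ker\varphi\cong C_m$, applies Lemma \ref{le-epi} so that $\varphi(U)\in\mathcal M_{km}(C_{n_1}\oplus C_{n_1})$ is governed by the equal-rank Theorem \ref{main1}, and then lifts the structure through \emph{block decompositions} $U=W_0W_1\cdots W_{km-1}$. The rigidity your reinsertion step lacks is exactly what Claims \ref{sch-A1}--\ref{sch-A3} provide: since $\sigma(W_0)\cdots\sigma(W_{km-1})$ is an extremal sequence in $\mathcal M_{km/m\cdot m}$ over the cyclic kernel, all block sums equal one fixed generator $g_0$ of $\ker\varphi$, and swapping subsequences between blocks with equal $\varphi$-image forces equal sums, which pins down the lifts $\alpha,\beta,\psi(g)$ term by term. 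Your sufficiency sketch is closer to the mark: the counting arguments for (I) and (II) are correct, but for (III) and (IV) the vague ``argue in both cyclic directions at once'' must be replaced by the explicit congruence computation (as in the paper) showing that every nontrivial zero-sum subsequence has length at least $n_2$, after which Proposition \ref{pr-key} gives membership in $\mathcal M_k(G)$ uniformly for all four types.
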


\section{Notation and Preliminaries}\label{Sec-Prelim}

Our notations and terminology are consistent with \cite{GE} and \cite{Gr-book}. Let $\mathbb{N}$ denote the set of positive integers  and $\mathbb{N}_0=\mathbb{N}\cup\{0\}$. For real numbers $a, b\in \mathbb{R}$, we set the discrete interval $[a, b]=\{x\in \mathbb{Z}\colon a\leq x\leq b\}$.  Throughout this paper, all abelian groups will be written additively, and for $n\in \mathbb{N}$, we denote by $C_n$ a cyclic group with $n$ elements.

Let $G$ be a finite abelian group. It is well-known that $|G|=1$ or $G\cong C_{n_1}\oplus \ldots \oplus C_{n_r}$ with $1<n_1\mid \ldots \mid n_r\in \mathbb{N}$, where $r=\mathsf{r}(G)\in \mathbb{N}$ is the \emph{rank} of $G$, and $n_r={\exp}(G)$ is the \emph{exponent} of $G$. We denote by $|G|$ the \emph{order} of $G$, and $\ord(g)$ the \emph{order} of  $g\in G$. 

Let $\mathcal F(G)$ be the free abelian (multiplicatively written) monoid with basis $G$. Then sequences over $G$ could be viewed as  elements of $\mathcal F(G)$. A  sequence $S\in \mathcal F(G)$ could be written as 
$$S=g_1\cdot \ldots \cdot g_l=\prod_{g\in G}g^{\mathsf v_{g}(S)}\,,$$
where  $\mathsf v_{g}(S)\in \mathbb{N}_0$ is the multiplicity of $g$ in $S$. 
We call
\begin{itemize}
	\item $\supp(S)=\{g\in G\colon \mathsf v_g(S)>0\}\subset G$ the \emph{support} of $S$, and
	\item $\sigma(S)=\sum_{i=1}^{l}g_i=\sum_{g\in G}\mathsf v_g(S)g\in G$ the \emph{sum} of $S$. 	
\end{itemize}
Let $t\in \N$. We denote  $$\Sigma_{\le t}(S)=\left\{\sum_{i\in I}g_i\colon I\subseteq [1, l] \mbox{ with } 1\leq |I|\leq t\right\}\,.$$

A sequence $T\in \mathcal F(G)$ is called a subsequence of $S$ if $\mathsf v_{g}(T)\leq \mathsf v_{g}(S)$ for all $g\in G$, and denoted by $T\mid S$. If $T\mid S$, then we denote
$$T^{-1} S=\prod_{g\in G}g^{\mathsf v_g(S)-\mathsf v_g(T)}\in \mathcal F(G)\,.$$
Let $T_1, T_2\in \mathcal F(G)$.  We set
$$T_1 T_2=\prod_{g\in G}g^{\mathsf v_{g}(T_1)+\mathsf v_{g}(T_2)}\in \mathcal{F}(G)\,.$$
If $T_1,\ldots, T_t\in \mathcal F(G)$ such that $T_1\cdot\ldots\cdot T_t\t S$, where $t\ge 2$, then 
 we say $T_1,\ldots,T_t$ are disjoint subsequences of $S$.

Every map of abelian groups $\phi: G\rightarrow H$ extends to a map from the sequences over $G$ to the sequences over $H$ by setting $\phi(S)=\phi(g_1)\cdot \ldots \cdot \phi(g_l)$. If $\phi$ is a homomorphism, then $\phi(S)$ is a zero-sum sequence if and only if $\sigma(S)\in \mathsf{ker}(\phi)$.

We denote by $\mathsf E(G)$ the Gao's constant which is the smallest integer $\ell$ such that every sequence over $G$ of length $\ell$ has a zero-sum subsequence of length $|G|$ and by $\eta(G)$ the smallest integer $\ell$ such that every sequence over $G$ of length $\ell$ has a zero-sum subsequence $T$ of length $1\le |T|\le \exp(G)$. 
Let $\mathsf d(G)$ be the maximal length of a sequence over $G$ that has no zero-sum subsequence. Then it is easy to see that $\mathsf d(G)=\mathsf D(G)-1$.
The following result is well-known and we may use it without further mention.

\begin{lemma}\label{le-E}
	Let $G$ be a finite abelian group. Then $\mathsf E(G)=|G|+\mathsf d(G)\le 2|G|-1$.
 	\end{lemma}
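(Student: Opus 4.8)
The plan is to prove the two inequalities $\mathsf E(G) \ge |G| + \mathsf d(G)$ and $\mathsf E(G) \le |G| + \mathsf d(G)$ separately, and to deduce the final bound $|G| + \mathsf d(G) \le 2|G|-1$ from the elementary estimate $\mathsf d(G) \le |G|-1$. For the latter I would run the standard prefix-sum pigeonhole: given any sequence $g_1 \cdots g_{|G|}$ of length $|G|$, the $|G|+1$ partial sums $0,\, g_1,\, g_1+g_2,\, \dots,\, g_1 + \dots + g_{|G|}$ cannot all be distinct in $G$, and the difference of two equal ones is a nonempty zero-sum subsequence; hence $\mathsf d(G) \le |G|-1$, and consequently $\mathsf D(G) = \mathsf d(G) + 1 \le |G|$, which will be used below.

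For the lower bound I would exhibit a single long sequence carrying no zero-sum subsequence of length $|G|$. Choosing $W$ with $|W| = \mathsf d(G)$ and no nonempty zero-sum subsequence, set $S = 0^{|G|-1} W$, so that $|S| = |G| - 1 + \mathsf d(G)$. Any subsequence $T \mid S$ with $|T| = |G|$ must contain at least one term of $W$, and since the zero terms contribute nothing, $\sigma(T)$ equals the sum of the terms $T$ takes from $W$; as that part is a nonempty subsequence of $W$, it cannot be zero-sum. Thus $S$ has no zero-sum subsequence of length $|G|$, and since $S$ itself fails the defining property of $\mathsf E(G)$ we get $\mathsf E(G) \ge |S| + 1 = |G| + \mathsf d(G)$.

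For the upper bound, let $|S| = |G| + \mathsf d(G) = |G| + \mathsf D(G) - 1$; I must produce a zero-sum subsequence of length exactly $|G|$. The first step is a greedy extraction: repeatedly remove minimal (atomic) zero-sum subsequences $B_1, B_2, \dots$ from $S$ until the remainder $R$ has no nonempty zero-sum subsequence, whence $|R| \le \mathsf d(G)$. Each atom satisfies $1 \le |B_i| \le \mathsf D(G) \le |G|$, and the extracted part has total length $\ge |S| - \mathsf d(G) = |G|$; in particular $S$ carries a zero-sum subsequence of length $\ge |G|$. The hard part will be passing from a zero-sum subsequence of length $\ge |G|$ to one of length exactly $|G|$: a naive partial-sum argument over the block lengths can overshoot $|G|$ when a single atom straddles the target, and trimming an atom destroys the zero-sum property. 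This completion step is precisely the content of Gao's theorem $\mathsf E(G) = \mathsf D(G) + |G| - 1$, and it is the genuine obstacle; I would handle it by the standard route, re-balancing the extracted blocks against the leftover terms of $R$ (using induction on $|G|$ together with the known structure of minimal zero-sum sequences and of $\mathsf D(G)$) so as to realize every intermediate block-length down to exactly $|G|$. Once length $|G|$ is achieved, the two inequalities combine with $\mathsf d(G) \le |G|-1$ to give $\mathsf E(G) = |G| + \mathsf d(G) \le 2|G|-1$.
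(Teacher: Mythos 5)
Your lower bound and the pigeonhole estimate $\mathsf d(G)\le |G|-1$ are correct and are indeed the standard arguments: the sequence $0^{|G|-1}W$ with $W$ zero-sum free of length $\mathsf d(G)$ shows $\mathsf E(G)\ge |G|+\mathsf d(G)$, and the prefix-sum argument gives $\mathsf D(G)\le |G|$. The genuine gap is exactly where you flag it: the upper bound $\mathsf E(G)\le |G|+\mathsf d(G)$ \emph{is} Gao's theorem, and your proposed completion step does not prove it. Your greedy extraction plus ``re-balancing'' scheme works only in the easy case: after translating so that a term of maximal multiplicity $h$ becomes $0$, the extracted blocks have lengths at most $\mathsf D(G)$, and if $h\ge \mathsf d(G)$ the partial sums of the block lengths must hit the window $[\,|G|-h,\,|G|\,]$, after which copies of $0$ pad the length to exactly $|G|$. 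When no element has high multiplicity there are no filler terms, the block lengths can overshoot the target with no way to trim an atom, and ``realizing every intermediate block-length'' is simply false in general. Your suggested repair via ``the known structure of minimal zero-sum sequences of length $\mathsf D(G)$'' is not available: that structure is unknown for general $G$, and even for rank-two groups it is the content of a long series of difficult papers (cited in this very article), so the route is circular. The actual proofs of Gao's theorem handle the low-multiplicity case by a different mechanism (counting/averaging over translates, or group-algebra and $p$-group reductions), none of which your sketch supplies.

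For comparison, the paper does not prove the lemma at all: it cites \cite[Propositions 5.7.9.2 and 5.1.4.4]{Ger-book}, i.e., it treats $\mathsf E(G)=|G|+\mathsf d(G)$ as the known theorem of Gao together with the elementary bound $\mathsf d(G)\le|G|-1$. Had you stopped at ``the upper bound is Gao's theorem, which I invoke,'' your write-up would match the paper's treatment while adding correct self-contained proofs of the two easy ingredients. As it stands, the attempt claims a proof strategy for the hard inequality that would fail, so the argument is incomplete at its essential point.
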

\begin{proof}
	See \cite[Propositions 5.7.9.2 and 5.1.4.4]{Ger-book}.
\end{proof}

We also need the following lemmas.
\begin{lemma}\label{le-cyc}
	Let $G$ be a finite abelian group. If $\mathsf D(G)=|G|$, then $G$ is cyclic and for every minimal zero-sum sequence $S$ over $G$ of length $|G|$,  there exists $g\in G$ with $\ord(g)=|G|$ such that $S=g^{|G|}$.
\end{lemma}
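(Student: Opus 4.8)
The plan is to read the structure of the minimal zero-sum sequence $S$ directly off its partial sums. Since $\mathsf D(G)=|G|$, a minimal zero-sum sequence of length $|G|$ exists, and the statement concerns all of them; so fix any such $S=g_1\cdots g_{|G|}$ and put $s_0=0$, $s_j=g_1+\cdots+g_j$ for $j\in[1,|G|]$, so that $s_{|G|}=\sigma(S)=0$. Were $s_i=s_j$ for some $0\le i<j\le|G|-1$, the block $g_{i+1}\cdots g_j$ would be a nonempty proper zero-sum subsequence, contradicting minimality; hence $s_0,\ldots,s_{|G|-1}$ are pairwise distinct and therefore enumerate $G$. Equivalently, deleting any one term of $S$ yields a zero-sum-free sequence $T$ of length $|G|-1$ whose set of subsequence sums $\Sigma(T)$ equals $G\setminus\{0\}$: the inclusion $\Sigma(T)\subseteq G\setminus\{0\}$ is zero-sum-freeness, and since the partial sums of $T$ along any fixed ordering are already $|T|=|G|-1$ distinct nonzero elements of $\Sigma(T)$, equality holds.

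The heart of the matter is to show that this extremal configuration forces $G$ to be cyclic and $S=g^{|G|}$ for a generator $g$; this is the inverse problem for the Davenport constant of cyclic groups together with the fact that a zero-sum-free sequence of the maximal length $|G|-1$ can exist only when $G$ is cyclic. I would argue by induction on $|G|$. For the cyclic case $G=C_n=\Z/n\Z$ one can, after applying a suitable automorphism, compare the integer sum of the terms (taken in $[1,n-1]$) with the telescoping sum $\sum_i(s_i-s_{i-1})=0$ of the distinct representatives $s_i\in[0,n-1]$; this yields $\sum_i a_i=nD$, where $D$ counts the ``wraps'' of the cyclic walk $0=s_0,\ldots,s_{n-1},0$, and the claim that all terms are equal is precisely the statement $D=1$, i.e. that a maximal-length minimal zero-sum sequence over $C_n$ has index one. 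For the reduction of general $G$ to the cyclic case one chooses a prime $p\mid\exp(G)$ and a subgroup $H\cong C_p$, transfers the property $\Sigma(T)=G\setminus\{0\}$ to the image of $T$ in $G/H$, and lifts while tracking the multiplicities of the terms of $T$ lying in $H$; this both excludes $\mathsf r(G)\ge2$ and pins down the single-generator form.

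The main obstacle is exactly this structural/inductive step, and it cannot be shortcut through the partial-sum bijection alone: distinctness of the partial sums of $S$ is necessary but not sufficient for minimality, since a non-consecutive subsequence may still sum to zero, so the argument must genuinely exploit the full zero-sum-freeness of $T$ (equivalently, the equality case of $|\Sigma(T)|\ge|T|$) rather than mere distinctness of consecutive blocks. Establishing this equality case and carrying out the lift across $G\to G/H$ is the delicate part; it is the content of the classical inverse theorem for $\mathsf D$ on cyclic groups (see \cite{Ger-book}), which I would cite to complete the structural step once the reduction above is in place. Given that $G$ is then cyclic of order $|G|$, the descent computation identifies $S=g^{|G|}$ with $\ord(g)=|G|$, as required.
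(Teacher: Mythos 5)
Your opening step is correct and standard: the partial sums of $S$ are pairwise distinct (a repeat would give a proper consecutive zero-sum block), hence enumerate $G$, and deleting a term leaves a zero-sum-free $T$ of length $|G|-1$ with $\Sigma(T)=G\setminus\{0\}$. Your treatment of the cyclic case is also fine and in fact coincides with the paper's: the paper does not reprove the inverse result for cyclic groups either, but cites \cite[Theorem 5.1.10.1]{Ger-book}, exactly the classical theorem you defer to.

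The genuine gap is the claim that $\mathsf D(G)=|G|$ forces $G$ cyclic. You only sketch this (``choose $H\cong C_p$, transfer $\Sigma(T)=G\setminus\{0\}$ to $G/H$, lift while tracking multiplicities''), and the sketch does not go through as stated. First, the property does not transfer in the form your induction needs: since every nonzero element of $H$ lies in $\Sigma(T)$, the image $\overline{T}$ of $T$ in $G/H$ satisfies $0\in\Sigma(\overline{T})$, and $\overline{T}$ has length $p\,|G/H|-1$, not $|G/H|-1$, so the inductive hypothesis about maximal zero-sum-free sequences over $G/H$ simply does not apply to it. Second, the natural repair---greedily extract disjoint subsequences of $T$ whose images in $G/H$ are zero-sum and examine their sums inside $H\cong C_p$---fails precisely in the critical case $\mathsf r(G)=2$: for $G=C_p\oplus C_p$ and any $H\cong C_p$, the quotient $G/H\cong C_p$ is cyclic with $\mathsf D(G/H)=p$, and from $|T|=p^2-1$ terms the greedy process only guarantees $p-1$ blocks, whose $p-1$ sums in $C_p$ can perfectly well be zero-sum free, so no contradiction is reached. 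Making such a lifting argument work requires a genuine block-decomposition analysis with multiplicity tracking (compare Lemma \ref{le-epi} and the machinery in the proof of Theorem \ref{main}), which you have not supplied; and the cyclic inverse theorem you cite cannot close this hole, since it presupposes that $G$ is cyclic. The paper disposes of this step in one line by a quantitative bound instead: by \cite[Theorem 5.5.5]{Gr-book}, $\mathsf D(G)\le n+n\log(|G|/n)$ with $n=\exp(G)$, and since $1+\log m<m$ for every integer $m\ge 2$, this is strictly less than $|G|$ whenever $G$ is noncyclic; hence $\mathsf D(G)=|G|$ forces $G$ cyclic, after which \cite[Theorem 5.1.10.1]{Ger-book} finishes. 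Either replace your inductive sketch by that citation (or by a complete argument for the noncyclic exclusion), or your proof remains incomplete at its central point.
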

\begin{proof}
	Let $n=\exp(G)$.
	By \cite[Theorem 5.5.5]{Gr-book}, we have $\mathsf D(G)\le n+n\log\frac{|G|}{n}$, whence $\mathsf D(G)=|G|$ implies that $G$ is cyclic. The remaining assertion follows from \cite[Theorem 5.1.10.1]{Ger-book}.
\end{proof}

\begin{lemma}\label{le-subgroup}
	Let $G$ be a finite abelian group and let $H\subset G$ be a proper subgroup. Then $\mathsf D_k(H)<\mathsf D_k(G)$ for all $k\in \N$.
\end{lemma}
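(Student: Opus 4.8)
The plan is to establish the two inequalities $\mathsf{D}_k(H)\le \mathsf{D}_k(G)$ and $\mathsf{D}_k(H)\neq \mathsf{D}_k(G)$ separately, the first being immediate and the second following from an explicit one-term extension construction. For the inequality $\mathsf{D}_k(H)\le \mathsf{D}_k(G)$, I would first record the elementary observation that for any sequence $W$ over $H$ and any subsequence $R\mid W$, all terms of $R$ lie in $H$, so $\sigma(R)=0$ holds in $G$ if and only if it holds in $H$. Consequently $W$, viewed over $H$, has $k$ disjoint nontrivial zero-sum subsequences if and only if it does so when viewed over $G$. Applying this to an arbitrary sequence over $H$ of length at least $\mathsf{D}_k(G)$ (which, being a sequence over $G$, has $k$ disjoint nontrivial zero-sum subsequences over $G$, hence over $H$) yields $\mathsf{D}_k(H)\le \mathsf{D}_k(G)$.

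For the strict inequality, I would produce a single sequence over $G$ of length $\mathsf{D}_k(H)$ that has no $k$ disjoint nontrivial zero-sum subsequences; by the defining minimality of $\mathsf{D}_k(G)$ this forces $\mathsf{D}_k(G)>\mathsf{D}_k(H)$. Concretely, choose a sequence $W$ over $H$ of length $\mathsf{D}_k(H)-1$ having no $k$ disjoint nontrivial zero-sum subsequences (such $W$ exists by the definition of $\mathsf{D}_k(H)$), pick any $g\in G\setminus H$ (possible since $H$ is proper), and set $S=Wg$. Since $\supp(W)\subseteq H$ we have $\mathsf{v}_g(W)=0$, hence $\mathsf{v}_g(S)=1$ and $|S|=\mathsf{D}_k(H)$. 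Suppose toward a contradiction that $S$ had $k$ disjoint nontrivial zero-sum subsequences $T_1,\dots,T_k$. As $g$ occurs in $S$ only once, at most one $T_i$ contains $g$. If none does, then $T_1,\dots,T_k\mid W$, contradicting the choice of $W$; otherwise, say $g\mid T_k$, and writing $T_k=gR$ with $R\mid W$, the zero-sum condition gives $\sigma(R)=-g$. But $R\mid W$ is a sequence over $H$, so $\sigma(R)\in H$, whereas $-g\notin H$ because $g\notin H$. This contradiction shows $S$ has no $k$ disjoint nontrivial zero-sum subsequences, and thus $\mathsf{D}_k(G)>\mathsf{D}_k(H)$.

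I do not anticipate a genuinely hard step here; the entire argument rests on isolating one coset obstruction, namely that appending a term from outside $H$ can never participate in a new zero-sum subsequence, since completing such a subsequence would require a sub-sum equal to $-g\notin H$ drawn from terms lying entirely in $H$. Once this is noticed, both the trivial inequality and the strict refinement are routine, and the only mild care needed is to phrase the argument so that it also covers the degenerate case $H=\{0\}$, where the maximal bad sequence over $H$ is $0^{\,k-1}$ and the same reasoning applies verbatim.
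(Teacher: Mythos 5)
Your proof is correct. Both halves check out: for $\mathsf D_k(H)\le \mathsf D_k(G)$, the observation that zero-sum over $G$ and over $H$ coincide for sequences supported in $H$ is exactly what is needed; and for strictness, the existence of a sequence $W$ over $H$ of length exactly $\mathsf D_k(H)-1$ without $k$ disjoint nontrivial zero-sum subsequences does follow from minimality (some failing sequence of length at least $\mathsf D_k(H)-1$ exists, and it cannot be longer since every sequence of length at least $\mathsf D_k(H)$ succeeds). The coset obstruction is also airtight: since $\mathsf v_g(Wg)=1$, at most one $T_i$ can contain $g$, and any such $T_i=gR$ with $R\mid W$ would force $-g=\sigma(R)\in H$, contradicting $g\notin H$; this covers the case $R$ trivial as well, since $0\in H$. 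Your remark on $H=\{0\}$ (where $\mathsf D_k(H)=k$ and $W=0^{k-1}$) is a nice touch, though the general argument already subsumes it.

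The comparison with the paper is quickly stated: the paper gives no argument of its own and simply cites \cite[Lemma 6.1.3]{Ger-book}, so there is nothing internal to compare against. Your write-up is the standard direct argument that underlies such citations --- append one element $g\in G\setminus H$ to an extremal sequence over $H$ and note that $g$ can never be completed to a zero-sum subsequence by terms from $H$. What your version buys is self-containment and transparency (the single ``coset obstruction'' idea is isolated explicitly), at the cost of a few lines that the paper outsources to the reference; there is no gap.
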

\begin{proof}
	The assertion follows from \cite[Lemma 6.1.3]{Ger-book}.
\end{proof}

\begin{theorem}\label{th-gen}
Let  $G = C_{n_1} \oplus C_{n_2}$  with $n_1 \mid n_2$, where $n_1,n_2\in \N$, and let $k\in \N$. Then $\eta(G)=2n_1+n_2-2$ and $\mathsf D_k(G)=n_1+kn_2-1$. In particular, $\mathsf D(G)=n_1+n_2-1$.
\end{theorem}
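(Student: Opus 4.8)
The plan is to prove the two displayed equalities separately; the asserted value $\mathsf{D}(G)=n_1+n_2-1$ then appears as the special case $k=1$. For each equality I would establish the lower bound from an explicit extremal sequence and the upper bound by combining a cited deep result with an elementary extraction argument. Throughout fix a basis $(e_1,e_2)$ of $G$ with $\ord(e_1)=n_1$ and $\ord(e_2)=n_2=\exp(G)$.

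For the $\eta$-constant I would obtain $\eta(G)\ge 2n_1+n_2-2$ from the sequence $S=e_1^{n_1-1}e_2^{n_2-1}(e_1+e_2)^{n_1-1}$ of length $2n_1+n_2-3$. A subsequence $e_1^{a}e_2^{b}(e_1+e_2)^{c}$ with $0\le a,c\le n_1-1$ and $0\le b\le n_2-1$ is zero-sum precisely when $a+c\equiv 0\pmod{n_1}$ and $b+c\equiv 0\pmod{n_2}$; since $a+c\le 2n_1-2$ and $b+c\le n_1+n_2-2<2n_2$, the only nontrivial possibility is $a+c=n_1$ and $b+c=n_2$, which forces length $n_1+n_2-c\ge n_2+1$. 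Hence $S$ has no nonempty zero-sum subsequence of length at most $n_2=\exp(G)$, giving the bound. For the reverse inequality $\eta(G)\le 2n_1+n_2-2$ I would quote the known exact value of $\eta$ for rank two groups (see, e.g., \cite{Gr-book}).

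For $\mathsf{D}_k(G)$ the lower bound $\mathsf{D}_k(G)\ge n_1+kn_2-1$ follows from $S=e_1^{n_1-1}e_2^{kn_2-1}$, of length $n_1+kn_2-2$: since $(e_1,e_2)$ is a basis, every nonempty zero-sum subsequence of $S$ equals $e_2^{b}$ for some positive multiple $b$ of $n_2$, and so uses at least $n_2$ terms; thus the $kn_2-1$ available copies of $e_2$ admit at most $k-1$ pairwise disjoint such subsequences, so $S$ has no $k$ disjoint nontrivial zero-sum subsequences. For the upper bound I would induct on $k$, the base case $k=1$ being the Davenport constant $\mathsf{D}(G)=n_1+n_2-1$ of a rank two group (\cite{Olson-rk2}). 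For $k\ge 2$, let $|S|=\mathsf{D}_{k-1}(G)+n_2=n_1+kn_2-1$; since $(k-1)n_2\ge n_2\ge n_1>n_1-1$ we have $|S|\ge\eta(G)$, so $S$ contains a nonempty zero-sum subsequence $T$ with $|T|\le n_2$. Then $|T^{-1}S|\ge\mathsf{D}_{k-1}(G)$, so $T^{-1}S$ has $k-1$ disjoint nontrivial zero-sum subsequences, and adjoining $T$ yields $k$. Hence $\mathsf{D}_k(G)\le\mathsf{D}_{k-1}(G)+n_2$, and the induction closes to $\mathsf{D}_k(G)\le n_1+kn_2-1$.

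The self-contained part, namely the two constructions and the extraction/induction, is routine; the weight of the theorem rests on the two cited inputs. In particular the only genuinely hard step is the upper bound $\eta(G)\le 2n_1+n_2-2$, which I would cite rather than reprove: it is exactly what converts the (possibly long) zero-sums guaranteed by $\mathsf{D}(G)$ into the short ones of length at most $\exp(G)$, which in turn is what makes the inductive increment $n_2$ per step correct.
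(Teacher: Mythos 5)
Your proposal is correct, but it is organized quite differently from the paper's proof, which disposes of Theorem~\ref{th-gen} in one line by citing \cite[Theorems 5.8.3 and 6.1.5]{Ger-book}, the two equalities being quoted verbatim from that monograph. What you do instead is unpack the standard argument underlying the second citation: your extremal sequences $e_1^{n_1-1}e_2^{n_2-1}(e_1+e_2)^{n_1-1}$ and $e_1^{n_1-1}e_2^{kn_2-1}$ give the lower bounds correctly (the key numerics check out: $a+c\le 2n_1-2$ and $b+c\le n_1+n_2-2<2n_2$ force $a+c=n_1$, $b+c=n_2$ and length $n_1+n_2-c\ge n_2+1$; and $kn_2-1$ copies of $e_2$ support at most $k-1$ disjoint zero-sums), while your induction $\mathsf{D}_k(G)\le \mathsf{D}_{k-1}(G)+n_2$ is legitimate because $n_1+kn_2-1\ge \eta(G)=2n_1+n_2-2$ is equivalent to $(k-1)n_2\ge n_1-1$, which holds for $k\ge 2$. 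This buys a semi-self-contained treatment in which the black boxes are reduced to exactly two deep inputs, $\eta(G)\le 2n_1+n_2-2$ and $\mathsf{D}(G)=n_1+n_2-1$; your observation that the base case cannot be absorbed into the induction is sound, since $n_1+n_2-1\ge\eta(G)$ only when $n_1=1$, so $\mathsf{D}(G)$ genuinely must be cited separately, and your decision to cite rather than reprove the $\eta$ upper bound is the right call, as that is where the real difficulty lies. Two small caveats: \cite{Olson-rk2} literally treats rank-two $p$-groups, so the general case $n_1\mid n_2$ needs the standard reduction (it is contained in the same source the paper cites, \cite[Theorem 5.8.3]{Ger-book}); and the theorem as stated permits $n_1=1$, where your sequences $e_1^{n_1-1}$ and $(e_1+e_2)^{n_1-1}$ degenerate to empty sequences --- the argument still goes through, but a sentence acknowledging the cyclic case would tighten the write-up.
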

\begin{proof}
	The assertion follows from \cite[Theorems 5.8.3 and 6.1.5]{Ger-book}.
\end{proof}

\begin{theorem}  \label{inverse}
	Let  $G = C_{n} \oplus C_{mn}$  with $n\geq 2$  and $m \ge 1$.  A sequence $S$
	over $G$ of length $\mathsf D (G) = n+mn-1$ is a minimal zero-sum
	sequence if and only if it has one of the following two forms{\rm
		\,:}
	\begin{itemize}
		\medskip
		\item[(I)] \[
		S = e_1^{\ord (e_1)-1} \prod_{i=1}^{\ord (e_2)}
		(x_{i}e_1+e_2),
		\]where
		\begin{itemize}\item[(a)] $\{e_1, e_2\}$ is a basis of $G$,
			\item[(b)] $x_1, \ldots, x_{\ord (e_2)}  \in
			[0, \ord (e_1)-1]$ and $x_1 + \ldots + x_{\ord (e_2)} \equiv 1
			\mod \ord (e_1)$. \end{itemize} In this case, we say that $S$ is of type I(a) or I(b) according to whether $\ord(e_2)=n$ or $\ord(e_2)=mn>n$.
		
		\medskip
		\item[(II)] \[
		S = f_1^{sn - 1} f_2^{(m-s)n+\epsilon}\prod_{i=1}^{n-\epsilon} ( -x_{i} f_1 +
		f_2),
		\] where
		\begin{itemize}
			\item[(a)] $\{f_1, f_2\}$ is a generating set for  $G$ with $\ord (f_2) =
			mn$ and $\ord(f_1)>n$,
			\item[(b)] $\epsilon\in [1,n-1]$  and
			$s \in [1, m-1]$,
			\item[(c)] $x_1, \ldots, x_{n-\epsilon} \in [1, n-1]$ with $x_1 + \ldots + x_{n-\epsilon} = n-1$,  \item[(d)] either  $s=1$ or
			$nf_1 = nf_2$, with both holding when $m=2$, and
			\item[(e)] either $\epsilon\geq 2$  or $nf_1\neq nf_2$.\end{itemize} In this case, we say that $S$ is of type II.
	\end{itemize}
\end{theorem}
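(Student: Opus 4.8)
The forward implication is a direct verification. For a sequence of form (I) one checks $|S| = (\ord(e_1)-1) + \ord(e_2) = n + mn - 1 = \mathsf D(G)$ (since $\{\ord(e_1),\ord(e_2)\}=\{n,mn\}$) and, using $x_1 + \dots + x_{\ord(e_2)} \equiv 1 \pmod{\ord(e_1)}$, that $\sigma(S)=0$. Minimality is the only real point: if $T \mid S$ is a nontrivial zero-sum subsequence using $j$ copies of $e_1$ together with some of the terms $x_ie_1+e_2$, then reading off the $e_2$-coordinate shows that the number of chosen terms $x_ie_1+e_2$ is divisible by $\ord(e_2)$, hence is $0$ or $\ord(e_2)$; the $e_1$-coordinate congruence together with $\sum x_i\equiv 1$ then forces $T\in\{1,S\}$. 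For form (II) the same bookkeeping works in the coordinates supplied by $f_2$ and the image of $f_1$, and here the side conditions (d) (namely $nf_1=nf_2$ whenever $s>1$) and (e) are exactly what is needed to exclude a proper zero-sum block; I would examine the two extreme possibilities for how many terms $-x_if_1+f_2$ a candidate $T$ could contain and rule each out.

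For the reverse implication, the case $m=1$ is $G=C_n\oplus C_n$, where form (II) is vacuous ($s\in[1,m-1]=[1,0]=\emptyset$) and the assertion is precisely Property B; I take this as the deep input established in the series of papers cited in the introduction \cite{Reiher-propB-thesis,Gao-Ger-propB}. For $m\ge 2$ the plan is to reduce to this case through the $n$-torsion subgroup $H=\{g\in G\colon ng=0\}\cong C_n\oplus C_n$, via the canonical projection $\pi\colon G\to G/H\cong C_m$. Since $\sigma(S)=0$, the image $\pi(S)$ is a zero-sum sequence over the cyclic group $C_m$, so it factors into minimal zero-sum blocks $\pi(S)=V_1\cdots V_r$, each of length in $[1,m]$. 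Lifting yields a factorization $S=T_1\cdots T_r$ in which every $\sigma(T_i)$ lies in $\ker\pi=H$, and the sequence $W=\prod_{i=1}^{r}\sigma(T_i)$ over $H\cong C_n\oplus C_n$ satisfies $\sigma(W)=\sigma(S)=0$. The key observation is that minimality of $S$ forces $W$ to be a minimal zero-sum sequence over $H$: a proper nonempty subproduct of the $\sigma(T_i)$ summing to $0$ would assemble a proper nontrivial zero-sum subsequence of $S$, a contradiction. Consequently $r\le\mathsf D(C_n\oplus C_n)=2n-1$, and when $r$ attains this maximum Property B applies to $W$, producing a basis of $H$ aligned with $S$; lifting that basis to $G$ and accounting for the terms of $S$ that $\pi$ collapses into $H$ (recorded by the parameters $s$, $\epsilon$ and the exponents $x_i$) delivers the two forms (I) and (II).

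The main obstacle lies in this last step. The factorization of $\pi(S)$ into minimal blocks over $C_m$ is highly non-unique, and one must treat length-one blocks (terms of $S$ already lying in $H$) separately, as well as control the regime $r<2n-1$, where Property B does not apply verbatim and a direct combinatorial analysis of how the blocks distribute across the cosets of $H$ is required. Matching the resulting structure to the precise dichotomy (I)/(II) — in particular verifying the delicate conditions (d) and (e) that record when the generating set $\{f_1,f_2\}$ fails to be a basis (i.e. $nf_1\ne nf_2$) and pin down the admissible ranges of $s$ and $\epsilon$ — is where essentially all of the real work concentrates.
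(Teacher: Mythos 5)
There is a genuine gap, and you have in fact flagged it yourself: your reverse implication never actually produces form (II). The reduction through the $n$-torsion subgroup $H\cong C_n\oplus C_n$ is sound as far as it goes (the block-sum sequence $W=\prod_{i=1}^r\sigma(T_i)$ is indeed a minimal zero-sum sequence over $H$, by exactly the assembling argument you give), but the only structural tool you then invoke --- Property B, i.e.\ the maximal-length characterization over $C_n\oplus C_n$ --- applies only when $r=|W|=2n-1$, and nothing in your argument lets you choose the factorization of $\pi(S)$ so that $r$ reaches this bound. Worse, for precisely the type II sequences it \emph{cannot} be reached: there $\ord(f_1)>n$, so $\pi(f_1)\neq 0$ in $G/H\cong C_m$ and $S$ has no terms in $H$ at all; every block then has length close to $m$, forcing $r$ to be on the order of $n+(n-1)/m$, well below $2n-1$ once $m\geq 2$. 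So the regime you defer in your closing paragraph (``$r<2n-1$, where Property B does not apply verbatim'') is not a residual case to be controlled --- it is the entire type II half of the theorem, including the necessity of the delicate side conditions (d) and (e), and no substitute argument is offered. A proposal that says ``essentially all of the real work concentrates'' in an unexecuted step is a strategy outline, not a proof. (The forward direction for type II, where minimality must be verified \emph{using} (d) and (e), is likewise asserted rather than checked, and is itself a nontrivial case analysis.)

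For comparison: the paper does not prove this statement either --- it is quoted as a known deep result, with proof by citation to \cite[Main Proposition 5.4]{Ge-Gr-Yu15}, the formulation being distilled from the series \cite{Reiher-propB-thesis}, \cite{Gao-Ger-propB}, \cite{propB-GGG}, \cite{Sc10b}, \cite{Schlage-case9-propB}. Your reduction-to-Property-B plan is in the same spirit as how that literature actually proceeds (passing from $C_n\oplus C_{mn}$ to $C_n\oplus C_n$ is the engine of \cite{Sc10b} and \cite{propB-GGG}), so the outline is not wrong-headed; but carrying it out --- in particular the combinatorial analysis when the block count is submaximal, which is where forms (II)(d) and (II)(e) emerge --- is the content of those papers, and reproducing it is far beyond what your sketch supplies.
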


\begin{proof}
	The characterization of minimal zero-sum sequences of maximal length over groups of rank two was done in a series of papers by   Gao, Geroldinger, Grynkiewicz, Reiher, and Schmid. For the formulation used above we refer to \cite[Main Proposition 5.4]{Ge-Gr-Yu15}.
\end{proof}

\begin{lemma}\label{le-epi}
	Let $G$ be a finite abelian group, let $H$ be a cyclic subgroup of $G$, and let $\varphi\colon G\rightarrow G/H$ be the canonical epimorphism.
	If $S\in \mathcal M_k(G)$, then $\varphi(S)\in \mathcal M_{k|H|}(G/H)$.
\end{lemma}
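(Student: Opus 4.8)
The plan is to argue by contraposition. Since $S$ is a zero-sum sequence and $\varphi$ is a homomorphism, $\varphi(S)$ is automatically a zero-sum sequence over $G/H$; hence the only way to have $\varphi(S)\notin\mathcal M_{k|H|}(G/H)$ is for $\varphi(S)$ to admit a partition into $k|H|+1$ nontrivial zero-sum subsequences. So I will assume such a partition exists and produce from it a partition of $S$ into $k+1$ nontrivial zero-sum subsequences, which shows $S\notin\mathcal M_k(G)$ and establishes the contrapositive of the lemma.

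Concretely, suppose $\varphi(S)=\overline{T_1}\cdots\overline{T_{k|H|+1}}$ with each $\overline{T_j}$ nontrivial and zero-sum over $G/H$. Because $\varphi$ is applied termwise, the terms of $\varphi(S)$ are indexed by the same set as the terms of $S$, so this partition lifts to a partition $S=T_1\cdots T_{k|H|+1}$ into nontrivial subsequences with $\varphi(T_j)=\overline{T_j}$; the condition $\sigma(\overline{T_j})=0$ in $G/H$ means precisely that $\sigma(T_j)\in H$ for every $j$. The key idea is now to record these sums as a sequence over the \emph{cyclic} group $H$: set $W=\prod_{j=1}^{k|H|+1}\sigma(T_j)\in\mathcal F(H)$. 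Then $|W|=k|H|+1$ and $\sigma(W)=\sum_j\sigma(T_j)=\sigma(S)=0$, so $W$ is a zero-sum sequence over $H$.

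Applying Theorem~\ref{th-gen} to the cyclic group $H$ (the rank-one case, taking $n_1=1$) gives $\mathsf D_k(H)=k|H|$, and by the identity $\mathsf D_k(H)=\max\{|V|\colon V\in\mathcal M_k(H)\}$ every zero-sum sequence over $H$ of length exceeding $k|H|$ lies outside $\mathcal M_k(H)$. Since $|W|=k|H|+1>k|H|$, the sequence $W$ can be partitioned into $k+1$ nontrivial zero-sum subsequences over $H$, say corresponding to a partition of the index set $[1,k|H|+1]$ into nonempty blocks $B_1,\ldots,B_{k+1}$ with $\sum_{j\in B_i}\sigma(T_j)=0$ for each $i$. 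Setting $U_i=\prod_{j\in B_i}T_j$ yields sequences $U_1,\ldots,U_{k+1}$ that partition $S$, are each nontrivial (each $B_i$ is nonempty and each $T_j$ is nontrivial), and satisfy $\sigma(U_i)=\sum_{j\in B_i}\sigma(T_j)=0$. Thus $S$ is partitioned into $k+1$ nontrivial zero-sum subsequences, contradicting $S\in\mathcal M_k(G)$.

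The reduction itself is clean; the crux is the recognition that the sum data $\sigma(T_j)\in H$ assembles into a zero-sum sequence $W$ over the cyclic group $H$ whose length $k|H|+1$ exceeds $\mathsf D_k(H)=k|H|$, so that the rank-one Davenport bound is exactly what supplies the regrouping into $k+1$ parts. The points requiring care are the two bookkeeping passages: lifting the $G/H$-partition to genuine disjoint subsequences $T_j$ of $S$, and then regrouping the $T_j$ along the $H$-partition of $W$, in each case verifying that nontriviality is preserved and that the count of zero-sum parts is exactly $k+1$.
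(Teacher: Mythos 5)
Your proposal is correct and follows essentially the same route as the paper: both lift the decomposition of $\varphi(S)$ to $S=T_1\cdot\ldots\cdot T_{k|H|+1}$, pass to the sequence $\sigma(T_1)\cdot\ldots\cdot\sigma(T_{k|H|+1})$ of sums over the cyclic group $H$, and use $\mathsf D_k(H)=k|H|$ together with $\mathsf D_k(H)=\max\{|V|\colon V\in\mathcal M_k(H)\}$ to regroup the $T_j$ into $k+1$ nontrivial zero-sum subsequences of $S$. The only cosmetic difference is that you phrase the argument by contraposition where the paper argues by contradiction.
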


 \begin{proof}
 	Suppose $S\in \mathcal M_k(G)$. Assume to the contrary that $\varphi(S)\not\in \mathcal M_{k|H|}(G/H)$. Then we can decompose $S=T_1\cdot\ldots\cdot T_{k|H|+1}$ such that $\varphi(T_i)$, $i\in [1, k|H|+1]$, are nontrivial zero-sum sequences. Therefore $S_{\sigma}:=\sigma(T_1)\cdot\ldots\cdot \sigma(T_{k|H|+1})$ is a zero-sum sequence over $H$ with length $k|H|+1>\mathsf D_k(H)$. It follows by the definition of $\mathsf D_k(H)$ that $S_{\sigma}$ and hence $S$ are both a product of $k+1$ nontrivial zero-sum subsequences, a contradiction to $S\in \mathcal M_k(G)$.
 \end{proof}

\section{Proof of main theorems}\label{sec-k=n-1}

\begin{proposition}\label{pr-key}
	Let $G$ be a finite abelian group of rank at most $2$, let $k\in \N$, and let $S$ be a zero-sum sequence over $G$ of length $\mathsf D_k(G)$. Then $S\in \mathcal M_k(G)$ if and only if $0\not\in \Sigma_{\le \exp(G)-1}(S)$.
\end{proposition}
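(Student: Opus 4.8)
The plan is to fix the numerical data supplied by Theorem~\ref{th-gen}: writing $G\cong C_{n_1}\oplus C_{n_2}$ with $n_1\t n_2$ (where $n_1=1$ is permitted, covering the cyclic case), one has $\exp(G)=n_2$, $\mathsf D(G)=n_1+n_2-1$, $\eta(G)=2n_1+n_2-2$ and $|S|=\mathsf D_k(G)=n_1+kn_2-1$. I first record the reformulation that, for a \emph{zero-sum} sequence $V$, possessing $m$ disjoint nontrivial zero-sum subsequences is equivalent to admitting a partition into $m$ nontrivial zero-sum subsequences: if $B_1,\dots,B_m\t V$ are disjoint and nontrivial with each $\sigma(B_i)=0$, then $R=(B_1\cdots B_m)^{-1}V$ satisfies $\sigma(R)=\sigma(V)-\sum\sigma(B_i)=0$, so $R$ may be absorbed into $B_m$. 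Let $\nu(V)$ denote the largest such $m$. Since $|S|=\mathsf D_k(G)$ forces $k$ disjoint nontrivial zero-sum subsequences, absorption gives $\nu(S)\ge k$, and $S\in\mathcal M_k(G)$ is precisely the statement $\nu(S)=k$; equivalently, $S$ has no $k+1$ disjoint nontrivial zero-sum subsequences.

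The backward implication is then immediate. If $0\notin\Sigma_{\le\exp(G)-1}(S)$, then every nontrivial zero-sum subsequence of $S$ has length at least $\exp(G)=n_2$, so $k+1$ disjoint ones would have total length at least $(k+1)n_2=kn_2+n_2>kn_2+n_1-1=|S|$, using $n_2\ge n_1$. This is impossible, whence $\nu(S)\le k$ and $S\in\mathcal M_k(G)$.

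For the forward implication I argue the contrapositive. Suppose $T\t S$ is a nontrivial zero-sum subsequence with $|T|\le\exp(G)-1=n_2-1$; I must exhibit $k+1$ disjoint nontrivial zero-sum subsequences of $S$. Deleting $T$ leaves the zero-sum sequence $V=T^{-1}S$ with $|V|=|S|-|T|\ge n_1+(k-1)n_2$, and $T$ together with a partition of $V$ into $k$ parts will do. The naive attempt --- apply $\mathsf D_{k-1}(G)$ to $V$ --- only guarantees $k-1$ disjoint zero-sum subsequences of $V$, hence only $k$ parts of $S$; closing this one-part gap is the main obstacle.

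It is overcome by the sharper claim that is the technical core: \emph{every zero-sum sequence $V$ over $G$ with $|V|\ge n_1+(k-1)n_2=\mathsf D_{k-1}(G)+1$ can be partitioned into $k$ nontrivial zero-sum subsequences.} I would prove this by induction on $k$. For $k=1$ a nonempty zero-sum sequence is itself one part; for $k=2$ one has $|V|\ge n_1+n_2>\mathsf D(G)$, so $V$ is not minimal and splits into two parts. For $k\ge 3$ the inequality $|V|\ge n_1+(k-1)n_2\ge\eta(G)$ holds (it reduces to $(k-2)n_2\ge n_1-2$), so by definition of $\eta(G)$ there is a nontrivial zero-sum subsequence $P\t V$ with $|P|\le\exp(G)=n_2$; then $P^{-1}V$ is zero-sum of length at least $n_1+(k-2)n_2$, and the induction hypothesis supplies $k-1$ further parts. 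I would stress the subtlety that this claim is false for arbitrary sequences of such length, and even appears to fail for products of long minimal zero-sum sequences; what saves it is exactly that $V$ is zero-sum and long enough to activate $\eta(G)$, which forces a short zero-sum subsequence whose removal keeps the remainder above $\mathsf D(G)$. Applying the claim to $V=T^{-1}S$ yields the missing $k$ parts, so $S$ carries $k+1$ disjoint nontrivial zero-sum subsequences and $S\notin\mathcal M_k(G)$, completing the proof.
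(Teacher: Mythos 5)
Your proof is correct, and the backward direction coincides with the paper's: if $0\notin\Sigma_{\le \exp(G)-1}(S)$, every nontrivial zero-sum subsequence has length at least $\exp(G)$, and $(k+1)\exp(G)>|S|$; you merely unify the cyclic and rank-$2$ computations by allowing $n_1=1$, where the paper treats the two cases separately. In the forward direction the underlying reduction is also the same --- remove the short zero-sum subsequence $T$ and exploit that the remainder $T^{-1}S$ has length at least $\mathsf D_{k-1}(G)+1$ --- but you diverge in how the conclusion is extracted. The paper disposes of $k=1$ separately, notes $T^{-1}S\in\mathcal M_{k-1}(G)$, and derives a length contradiction from the identity $\mathsf D_{k-1}(G)=\max\{|V|\colon V\in\mathcal M_{k-1}(G)\}$, which it records without proof right after defining $\mathcal M_k(G)$; you instead prove the needed splitting statement from scratch, by induction on $k$ with base cases supplied by $\mathsf D(G)$ and the inductive step driven by $\eta(G)=2n_1+n_2-2$. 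Your route is fully self-contained where the paper leans on a folklore fact, at the cost of invoking $\eta(G)$, which the paper's proof of this proposition never needs. One remark: your technical claim is not as fragile as your aside suggests; it holds for \emph{every} finite abelian group, with no need of $\eta(G)$ or the rank restriction. Indeed, given a zero-sum $V$ with $|V|\ge \mathsf D_{k-1}(G)+1$, delete a single term $g$; the shortened sequence still has length at least $\mathsf D_{k-1}(G)$, hence contains $k-1$ disjoint nontrivial zero-sum subsequences avoiding $g$, and the complementary remainder is zero-sum and contains $g$, so it is nontrivial and furnishes the $k$-th part. This delete-one-term argument is exactly the general fact $\mathsf D_k(G)=\max\{|V|\colon V\in\mathcal M_k(G)\}$ that the paper takes as known, and substituting it for your $\eta(G)$-induction would make your proof both shorter and valid beyond rank $2$.
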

\begin{proof}
	Suppose $0\not\in \Sigma_{\le \exp(G)-1}(S)$. Assume to the contrary that $S=T_1\cdot\ldots\cdot T_{k+1}$, where $T_i$ is a nontrivial  zero-sum subsequence for each $i\in [1,k+1]$. Then $|T_i|\ge \exp(G)$ for each $i\in [1,k+1]$, ensuring $\mathsf D_k(G)=|S|\ge (k+1)\exp(G)$. If $G$ is cyclic, then $\mathsf D_k(G)=k\exp(G)<(k+1)\exp(G)$ (by Theorem \ref{th-gen}), a contradiction. If $\mathsf r(G)=2$, then $\mathsf D_k(G)=k\exp(G)+|G|/\exp(G)-1<(k+1)\exp(G)$ (by Theorem \ref{th-gen}), a contradiction.
	
	Suppose $S\in \mathcal M_k(G)$. Assume to the contrary that $0\in \Sigma_{\le \exp(G)-1}(S)$. 
	Then $S$ has a zero-sum subsequence $T$ with $1\le |T|\le \exp(G)-1$. If $k=1$, then it follows from $|S|=\mathsf D(G)>\exp(G)-1$ that $S\not\in \mathcal A(G)$, a contradiction. Thus we may assume that $k\ge 2$ and hence  $T^{-1}S\in \mathcal M_{k-1}(G)$. If $G$ is cyclic, then Theorem \ref{th-gen} implies  $$(k-1)\exp(G)+1= \mathsf D_k(G)-(\exp(G)-1)\le |T^{-1}S|\le \mathsf D_{k-1}(G)=(k-1)\exp(G)\,,$$ a contradiction.
	If $\mathsf r(G)=2$, then Theorem \ref{th-gen} implies
	 \begin{align*} &(k-1)\exp(G)+|G|/\exp(G)= \mathsf D_k(G)-(\exp(G)-1)\\
	 	\le &|T^{-1}S|\le \mathsf D_{k-1}(G)=(k-1)\exp(G)+|G|/\exp(G)-1\,,
	 \end{align*}
	a contradiction.
\end{proof}

We first investigate the associated inverse problem  for cyclic groups.

\begin{theorem}\label{le-cyclic}
	Let $G$ be cyclic, let $k\in \N$, and let $S$ be a zero-sum sequence over $G$ of length $\mathsf D_k(G)$. Then $S\in \mathcal M_k(G)$ if and only if there exists $g\in G$ with $\ord(g)=|G|$ such that $S=g^{k|G|}$.
\end{theorem}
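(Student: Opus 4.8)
The plan is to run everything through the criterion of Proposition \ref{pr-key}. Write $n = |G| = \exp(G)$, so that $\mathsf D_k(G) = kn$ and $\mathsf D(G) = n$ by Theorem \ref{th-gen}, and recall that Proposition \ref{pr-key} reduces the membership $S \in \mathcal M_k(G)$ to the condition $0 \notin \Sigma_{\le n-1}(S)$. The ``if'' direction is then immediate: if $S = g^{kn}$ with $\ord(g) = n$, every nonempty subsequence of length $\ell \le n-1$ equals $g^\ell$ and has sum $\ell g \ne 0$, so $0 \notin \Sigma_{\le n-1}(S)$. The real content is the converse, which I would establish by induction on $k$.

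For the base case $k = 1$ I would observe that a zero-sum sequence $S$ of length $n$ with $0 \notin \Sigma_{\le n-1}(S)$ has no proper nonempty zero-sum subsequence, hence is a minimal zero-sum sequence of length $|G| = \mathsf D(G)$; Lemma \ref{le-cyc} then forces $S = g^n$ with $\ord(g) = n$. For the inductive step ($k \ge 2$) the idea is to peel off one block of size $n$. Since $|S| = kn \ge \mathsf D(G)$, the sequence $S$ has a nonempty zero-sum subsequence; taking one $T$ of minimal length makes $T$ minimal zero-sum, and the two bounds $|T| \ge n$ (from $0 \notin \Sigma_{\le n-1}(S)$) and $|T| \le \mathsf D(G) = n$ pin down $|T| = n$, so Lemma \ref{le-cyc} again gives $T = g^n$ for some generator $g$. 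The complement $T^{-1}S$ is a zero-sum sequence of length $(k-1)n = \mathsf D_{k-1}(G)$ inheriting $0 \notin \Sigma_{\le n-1}(T^{-1}S)$, so Proposition \ref{pr-key} puts it in $\mathcal M_{k-1}(G)$ and the induction hypothesis yields $T^{-1}S = h^{(k-1)n}$ for some generator $h$.

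The one genuinely non-bookkeeping step, and the place where I expect the argument to live, is showing that the two generators coincide, i.e.\ $h = g$. Writing $h = cg$ with $c \in [1,n-1]$ and $\gcd(c,n) = 1$, I would argue by contradiction: if $c \ge 2$, then $n - c \in [1,n-1]$ and (using $k \ge 2$, which guarantees a factor $h$ is present) the subsequence $g^{\,n-c}h$ of $S = g^n h^{(k-1)n}$ has length $n-c+1 \le n-1$ and sum $(n-c)g + cg = ng = 0$, contradicting $0 \notin \Sigma_{\le n-1}(S)$. Hence $c = 1$, so $h = g$ and $S = g^{kn}$, closing the induction. All the difficulty is concentrated here; the earlier steps are a routine reduction combining Proposition \ref{pr-key}, Lemma \ref{le-cyc}, and the exact values of the Davenport constants.
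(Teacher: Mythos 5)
Your proof is correct and uses essentially the same ingredients as the paper's: the criterion $0\notin\Sigma_{\le |G|-1}(S)$ from Proposition \ref{pr-key}, Lemma \ref{le-cyc} to identify a minimal zero-sum subsequence of length $|G|$ as $g^{|G|}$, and the short zero-sum $g^{|G|-c}h$ to force the generators to coincide. The only difference is organizational: where you induct on $k$ and reconcile the generators $g$ and $h$ at each step, the paper peels off a single minimal zero-sum subsequence $T=g^{|G|}$ and shows directly that every term $h=sg$ of $T^{-1}S$ with $s\in[2,|G|]$ would yield the forbidden short zero-sum $g^{|G|-s}h$, so $\supp(T^{-1}S)=\{g\}$ in one stroke.
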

\begin{proof}
	Note that $\mathsf D_k(G)=k|G|$ (by Theorem \ref{th-gen}). If $S=g^{k|G|}$ for some $g\in G$ with $\ord(g)=|G|$, then the minimal zero-sum subsequence of $S$ can only be of the form $g^{|G|}$, whence $S$ is a product of at most $k$ zero-sum subsequnces. It follows that $S\in \mathcal M_k(G)$.
	
	Suppose $S\in \mathcal M_k(G)$. Let $T$ be a minimal zero-sum subsequence of $S$. By Proposition \ref{pr-key}, we have  $\exp(G)\le |T|\le \mathsf D(G)$, whence $|T|=|G|$ (since $G$ is cyclic). It follows from Lemma \ref{le-cyc} that there exists $g\in G$ with $\ord(g)=|G|$ such that $T=g^{|G|}$. Assume to the contrary that there exists $h\t T^{-1}S$ such that $h=sg$ with $s\in [2,n]$, ensuring that $g^{|G|-s}h$ is a zero-sum subsequence of $S$ with length $|G|-s+1\le |G|-1$, a contradiction to Proposition \ref{pr-key}. Therefore $\supp(T^{-1}S)\subset \{g\}$ and hence $S=g^{k|G|}$.
\end{proof}

Next, we prove Theorem \ref{main1}
 which could be handled easily by  Proposition \ref{pr-key} and \cite[Theorem 7.1]{Gao-Ger-propB}.
\begin{proof}[Proof of Theorem \ref{main1}]
	If $U$ is of type I, then since $\supp(U)\subset \{e_1\}\cup \langle e_1\rangle+e_2$ and $\ord(e_1)=\ord(e_2)=n$, we obtain that $0\not\in \Sigma_{\le n-1}(U)$. It follows from Proposition \ref{pr-key} that $U\in \mathcal M_k(G)$. Suppose $U$ is of type II. Assume to the contrary that there exists a nontrivial zero-sum subsequence $T$ of $U$ with $|T|\le n-1$. If $xe_1+2e_2\nmid T$, then $\supp(T)\subset \{e_1\}\cup \langle e_1\rangle+e_2$ and hence  $|T|\ge n$, a contradiction. Thus $T=(xe_1+2e_2)e_1^{\alpha}e_2^{\beta}(xe_1+e_2)^{\gamma}$ for some $\alpha,\beta,\gamma\in \N_0$, whence $2+\beta+\gamma\equiv 0\mod n$ and $x(1+\gamma)+\alpha\equiv 0\mod n$. Since $|T|=1+\alpha+\beta+\gamma\le n-1$, we obtain that $\alpha=0$, $\beta+\gamma=n-2$, and $n\mid x(1+\gamma)$. It follows from $\gcd(x,n)=1$ that $n\t 1+\gamma$, a contradiction to $\gamma+\beta=n-2$. Thus $0\not\in \Sigma_{\le n-1}(U)$. It follows from Proposition \ref{pr-key} that $U\in \mathcal M_k(G)$.

	Suppose $U\in \mathcal M_k(G)$. By 	\cite{Gi18}, the group $G=C_n\oplus C_n$ has Property B (see \cite[Chapter 5]{GE} for the definition of Property B) and by \cite[Theorem 6.7.2]{gaogeroldingersurvey}, every sequence over $G$ of length $3n-2$ has a zero-sum subsequence of length $n$ or $2n$. Thus all the assumptions of \cite[Theorem 7.1]{Gao-Ger-propB} are fulfilled and hence the assertions follows from \cite[Theorem 7.1]{Gao-Ger-propB}.
\end{proof}

\begin{lemma}\label{le-key}  Let $G=C_n\oplus C_n$ with $n\ge 2$ and let $k\ge 2$. If  $S\in\mathcal F(G)$  is a zero-sum sequence with $|S|=(k+1)n-1$ and $0\notin \Sigma_{\leq n-1}(S)$, then  there is a basis $(e_1,e_2)$ for $G$ such that either
	\begin{itemize}
		\item[1.] $\supp(S)\subseteq \{e_1\}\cup \big(\la e_1\ra+e_2\big)$ and $\vp_{e_1}(S)\equiv -1\mod n$, or
		\item[2.] $S=e_1^{an} e_2^{bn-1} (xe_1+e_2)^{cn-1}  (xe_1+2e_2)$ for some $x\in [2,n-2]$ with $\gcd(x,n)=1$, and some  $a,b,c\geq 1$ with $k+1=a+b+c$.
	\end{itemize}
\end{lemma}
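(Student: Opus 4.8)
The plan is to deduce this directly from the two results already established for $G=C_n\oplus C_n$, namely Proposition \ref{pr-key} and Theorem \ref{main1}; no fresh structural analysis is needed. First I would record that, by Theorem \ref{th-gen}, $\mathsf D_k(G)=n+kn-1=(k+1)n-1=|S|$, so $S$ is a zero-sum sequence of length exactly $\mathsf D_k(G)$ over the rank-two group $G$. Since $\exp(G)=n$, the hypothesis $0\notin\Sigma_{\le n-1}(S)$ is precisely the condition $0\notin\Sigma_{\le \exp(G)-1}(S)$ of Proposition \ref{pr-key}. Applying that proposition (its hypotheses $\mathsf r(G)\le 2$ and $|S|=\mathsf D_k(G)$ are met) yields $S\in\mathcal M_k(G)$.

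Next I would feed $S$ into Theorem \ref{main1}, which characterises the members of $\mathcal M_k(G)$ of maximal length as being of type (I) or type (II) for a suitable basis $(e_1,e_2)$. It then remains only to match these two types against the two alternatives of the lemma. Type (II) is alternative 2 verbatim (with $a+b+c=k+1$ and $x\in[2,n-2]$, $\gcd(x,n)=1$). For type (I) one has $S=e_1^{k_1n-1}\prod_{i\in[1,k_2n]}(x_ie_1+e_2)$, so $\supp(S)\subseteq\{e_1\}\cup(\la e_1\ra+e_2)$ and $\vp_{e_1}(S)=k_1n-1\equiv-1\pmod n$; this is exactly alternative 1. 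Hence $S$ satisfies one of the two claimed descriptions, which is the assertion of the lemma.

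Since all the combinatorial substance sits inside Theorem \ref{main1}, there is no genuine obstacle in this argument: the only work is the elementary bookkeeping identifying the two parametrisations, obtained by reading off the $e_1$- and $e_2$-coordinates of a zero-sum sequence supported on $\{e_1\}\cup(\la e_1\ra+e_2)$, where the $e_2$-coordinate forces the number of terms $x_ie_1+e_2$ to be a multiple of $n$ and the $e_1$-coordinate then gives $\vp_{e_1}(S)\equiv -1\pmod n$. (The assumption $k\ge 2$ only serves to make type (II), and hence alternative 2, possible, as already noted in Theorem \ref{main1}; it creates no case distinction here.) If instead one wanted a proof bypassing the external input behind Theorem \ref{main1}, the hard part would be the direct extraction of a basis and the structural classification from $0\notin\Sigma_{\le n-1}(S)$ — but that is precisely the theorem we are entitled to invoke.
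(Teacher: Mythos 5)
Your proposal is correct and coincides with the paper's own proof, which likewise notes $|S|=\mathsf D_k(G)=(k+1)n-1$ via Theorem \ref{th-gen}, invokes Proposition \ref{pr-key} to get $S\in\mathcal M_k(G)$, and then reads off the two alternatives from Theorem \ref{main1} (type (I) giving the support condition with $\vp_{e_1}(S)\equiv-1\bmod n$, type (II) matching alternative 2 verbatim). The only additional remark in the paper is a pointer to an independent direct proof in \cite[Lemma 3.2]{Gr21} assuming Property B, which your closing comment essentially anticipates.
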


\begin{proof}
	By Theorem \ref{th-gen}, we have $\mathsf D_k(G)=(k+1)n-1$.
	Since $0\notin \Sigma_{\leq n-1}(S)$, it follows from 
	 Proposition \ref{pr-key} that $S\in \mathcal M_k(G)$. Now the assertion follows from Theorem \ref{main1}. Moreover, there is a direct proof of this lemma under the assumption of $G=C_n\oplus C_n$ having Property B (see \cite[Lemma 3.2]{Gr21}).
\end{proof}

The following lemma shows two special cases of Theorem \ref{main}.

\begin{lemma}\label{sch-I}
	Let  $G = C_{n_1} \oplus C_{n_2}$  with $1 < n_1 \mid n_2$ and $n_1<n_2$, let $k\ge 2$, and let  $U \in \mathcal M_k(G)$ with  $|U|=\mathsf D_k (G)$. 
	\begin{enumerate}
		\item[1.] If there is some $e_1\in \supp(U)$ such that $\ord(e_1)=n_1$ and $\vp_{e_1}(U)\geq n_1-1$, then 
		there exists $e_2\in G$ with  $\ord(e_2)=n_2$ such that  $(e_1,e_2)$ is a basis of $G$ and
		\[U = e_1^{n_1-1}  \prod_{i\in [1,kn_2]} (x_{i}e_1+e_2)\,,
		\]
		where $x_1, \ldots, x_{kn_2}  \in [0, n_1-1]$ and $x_1 + \ldots + x_{kn_2} \equiv 1 \mod n_1$.
		
		\item[2.] If there is some $e_2\in \supp(U)$ such that  $\ord(e_2)=n_2$ and $\vp_{e_2}(U)\geq kn_2-1$, then 
		there exists $e_1\in G$ with $\ord(e_1)=n_1$  such that $(e_1,e_2)$ is a basis of $G$ and
		\[U = e_2^{kn_2-1}  \prod_{i\in [1,n_1]} (e_1+x_{i}e_2)\,,
		\]
		where $x_1, \ldots, x_{n_1}  \in [0, n_2-1]$ and $x_1 + \ldots + x_{n_1} \equiv 1 \mod n_2$.
	\end{enumerate}
\end{lemma}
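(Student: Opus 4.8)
The plan is to prove the two parts by reducing each to a cyclic group via a well-chosen quotient and then quoting the cyclic/equal-rank results already in hand. Throughout I work with the hypothesis supplied by Proposition \ref{pr-key}, namely $0\notin\Sigma_{\le n_2-1}(U)$.

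Part 2 I expect to be the clean case. First I would pin down the multiplicity: if $\vp_{e_2}(U)\ge kn_2$, then splitting off $k$ copies of the zero-sum $e_2^{n_2}$ leaves the zero-sum sequence $Ue_2^{-kn_2}$ of length $n_1-1\ge 1$, giving $k+1$ nontrivial zero-sum subsequences and contradicting $U\in\mathcal M_k(G)$; hence $\vp_{e_2}(U)=kn_2-1$ and $W:=Ue_2^{-(kn_2-1)}$ has length $n_1$ and sum $e_2$. Since $\ord(e_2)=n_2=\exp(G)$, the subgroup $\langle e_2\rangle$ is a direct summand and $\psi\colon G\to G/\langle e_2\rangle\cong C_{n_1}$ maps onto a cyclic group. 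By Lemma \ref{le-epi}, $\psi(U)\in\mathcal M_{kn_2}(C_{n_1})$, and as $\psi(U)=0^{kn_2-1}\psi(W)$ this forces $\psi(W)$ to be a single minimal zero-sum over $C_{n_1}$ of length $n_1=\mathsf D(C_{n_1})$. By Lemma \ref{le-cyc}, $\psi(W)=g^{n_1}$ for a generator $g$, so all terms of $W$ lie in one coset; choosing $e_1$ in that coset with $(e_1,e_2)$ a basis gives $W=\prod(e_1+x_ie_2)$, and $\sigma(U)=0$ yields $\sum x_i\equiv 1\pmod{n_2}$.

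For Part 1 I would instead quotient by $\langle e_1\rangle$. As above, $\vp_{e_1}(U)=n_1-1$ (otherwise $e_1^{n_1}$ is a forbidden short zero-sum). Writing $V:=Ue_1^{-(n_1-1)}$ and $\bar V:=\varphi(V)$ for $\varphi\colon G\to\bar G:=G/\langle e_1\rangle$, the key first observation is that $0\notin\Sigma_{\le n_2-n_1}(\bar V)$: any zero-sum subsequence of $\bar V$ of length $\le n_2-n_1$ lifts, after adjoining at most $n_1-1$ copies of $e_1$, to a zero-sum subsequence of $U$ of length $\le n_2-1$, contradicting Proposition \ref{pr-key}. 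Since $n_1\le n_2/2$ we have $n_2-n_1\ge n_2/2\ge\exp(\bar G)$ whenever $\bar G$ has rank $2$; then $\bar V$, a sequence of length $kn_2\ge n_2\ge\eta(\bar G)$ having no nontrivial zero-sum subsequence of length $\le\exp(\bar G)$, would violate the definition of $\eta(\bar G)$. Hence $\bar G$ is cyclic of order $n_2$, which is exactly the assertion that $e_1$ extends to a basis $(e_1,f)$ with $\ord(f)=n_2$. Over $\bar G\cong C_{n_2}$ every term of $\bar V$ has order exceeding $n_2-n_1\ge n_2/2$, hence order $n_2$, so every term is a generator.

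It remains to show that all terms of $\bar V$ coincide, i.e. that $\bar V=g^{kn_2}$ for a generator $g$; granting this, I choose $e_2$ in the corresponding coset with $\ord(e_2)=n_2$ and $(e_1,e_2)$ a basis, write the terms of $V$ as $x_ie_1+e_2$, and read $\sum x_i\equiv 1\pmod{n_1}$ off $\sigma(U)=0$. This coincidence-of-coset step is where I expect the real difficulty. The induced condition $0\notin\Sigma_{\le n_2-n_1}(\bar V)$ is strictly weaker than $\bar V\in\mathcal M_k(\bar G)$ (passing to the quotient by $\langle e_1\rangle$ yields only membership in $\mathcal M_{kn_1}(\bar G)$, which is far from extremal, so Theorem \ref{le-cyclic} cannot be quoted directly). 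I would attack it by combining the structure of long minimal zero-sum sequences over $C_{n_2}$ (each atom of $\bar V$ has length $>n_2/2$ and is therefore dominated by a single generator) with the full strength of $0\notin\Sigma_{\le n_2-1}(U)$ in $G$: two distinct generators occurring in $\bar V$ produce, through their ratio, a zero-sum of controlled length which, after adjoining copies of $e_1$ and exploiting the coupling between the two coordinates of $U$, drops below the forbidden threshold $n_2-1$. An alternative route worth recording is to pass instead to $\tilde G:=G/n_1G\cong C_{n_1}\oplus C_{n_1}$, whose kernel $n_1G\cong C_{n_2/n_1}$ is cyclic; here $\varphi(U)\in\mathcal M_{kn_2/n_1}(\tilde G)$ is genuinely extremal, so Theorem \ref{main1} applies directly and the multiplicity hypothesis singles out its Form I, after which only the lift across the cyclic kernel $n_1G$ remains. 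Either way the crux is identical: upgrading the coset information modulo $\langle e_1\rangle$ (resp.\ $n_1G$) to an exact statement in $G$, which genuinely requires the two-coordinate zero-sum condition rather than merely its image in the quotient.
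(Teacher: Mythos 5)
Your Part 2 is correct and is essentially the paper's argument in slightly different dress: where you quotient by $\langle e_2\rangle$ and invoke Lemma \ref{le-epi} to force $\psi(W)$ to be an atom of length $n_1=\mathsf D(C_{n_1})$, the paper splits off $e_2^{(k-1)n_2}$, observes that $e_2^{n_2-1}T$ must then be a minimal zero-sum sequence, and applies the projection along $\langle e_2\rangle$; both conclude with Lemma \ref{le-cyc}. Your preliminary normalization $\mathsf v_{e_2}(U)=kn_2-1$ is a harmless extra, and likewise your opening steps in Part 1 (the reduction $0\notin\Sigma_{\le n_2-n_1}(\bar V)$ is exactly the paper's, and your $\eta$-based proof that $G/\langle e_1\rangle$ is cyclic is a valid alternative to the paper's route through Lemma \ref{le-cyc}).

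Part 1, however, has a genuine gap, and it sits exactly where you say it does: the assertion that all terms of $\bar V$ coincide, i.e.\ $\bar V=\bar g^{\,kn_2}$ for one generator $\bar g$, is never proved --- your two ``attack routes'' are plans, not arguments. Moreover, the one intermediate claim you do state as established in this region is unjustified as a deduction: from $0\notin\Sigma_{\le n_2-n_1}(\bar V)$ one cannot conclude that every term of $\bar V$ has order exceeding $n_2-n_1$, since a single term of small order produces no short zero-sum subsequence unless suitably repeated; for instance, over $C_{12}$ with $n_1=2$ the atom $g^{10}(2g)$ is locally compatible with $0\notin\Sigma_{\le 10}$ yet contains the term $2g$ of order $6$. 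The missing idea is the paper's blocking device. Since $\mathsf E(G/H)\le 2|G/H|-1=2n_2-1$ (Lemma \ref{le-E}), the length-$kn_2$ sequence $T$ factors as $T=T_1\cdot\ldots\cdot T_k$ with $|T_i|=n_2$ and each $\varphi(T_i)$ zero-sum; any splitting of some $\varphi(T_i)$ would yield a zero-sum of length $\le n_2/2\le n_2-n_1$, so each $\varphi(T_i)$ is a \emph{minimal} zero-sum of length $n_2=|G/H|$, whence by Lemma \ref{le-cyc} each equals $g_i^{n_2}$ for a generator $g_i$ --- pure powers, not general atoms. If two blocks carried distinct generators $g$ and $sg$ with $s\in[2,n_2-1]$ and $\gcd(s,n_2)=1$, then choosing $t\in\N$ minimal with $t(s-1)\ge n_1$, the subsequence $g^{n_2-ts}(sg)^t$ of the two blocks is zero-sum of length $n_2-t(s-1)\le n_2-n_1$, a contradiction; hence $\varphi(T)=g^{kn_2}$. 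Your first route would have to manufacture such a short zero-sum from two \emph{general} atoms of length $>n_2/2$, which requires Savchev--Chen-type structure \cite{Sa-Ch07a} not among the paper's tools and is precisely the step you do not carry out. Your second route (quotient to $G/n_1G\cong C_{n_1}\oplus C_{n_1}$ and Theorem \ref{main1}) is not circular, since Theorem \ref{main1} rests on \cite{Gao-Ger-propB}, but the deferred ``lift across the cyclic kernel'' is exactly the block-decomposition analysis (Claims \ref{sch-A1}--\ref{sch-A3}) that occupies the proof of Theorem \ref{main}, in which Lemma \ref{sch-I} is itself the tool used to finish; nothing lighter is available along that road.
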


\begin{proof}			
	1.	Suppose there exists $e_1\in \supp(U)$ with $\ord(e_1)=n_1$ and $\mathsf v_{e_1}(U)\ge n_1-1$. 
		Let $$H=\la e_1\ra$$ and let $\phi_H:G\rightarrow G/H$ be the canonical epimorphism. 
		Define $T$ by \be\label{U-hug}U=e_1^{n_1-1} T, \text{ where }T\in \mathcal F(G).\ee
		Then $\phi_H(T)$ is zero-sum over $G/H$ of length $\mathsf D_k(G)-(n_1-1)=kn_2$ (by Theorem \ref{th-gen}).
		Assume to the contrary that $0\in \Sigma_{\le n_2-n_1}(\phi_H(T))$. Then there exists a nontrivial subsequence $T'$ of $T$ with $|T'|\le n_2-n_1$  such that $\sigma(T')=se_1$ for some $s\in [1,n_1]$.
		It follows that $e_1^{n_1-s}T'$ is zero-sum of length $n_1-s+|T'|\le n_1-1+n_2-n_1\le n_2-1$, a contradiction to Proposition \ref{pr-key}. Thus $0\not\in \Sigma_{\le n_2-n_1}(\phi_H(T))$.

		By Lemma \ref{le-E}, we have $\mathsf E(G/H)\le 2|G/H|-1=2n_2-1$ and by repeatedly using this result, we can factorize  $T=T_1\cdot\ldots\cdot T_k$ such that $|T_i|=n_2$ and $\phi_H(T_i)$ is zero-sum for every $i\in [1,k]$.
		If there exists $i\in [1,k]$ such that $\phi_H(T_i)$ is not minimal, then $T_i=T_i^{(1)}T_i^{(2)}$ with $|T_i^{(1)}|\ge |T_i^{(2)}|\ge 1$ such that both $\phi_H(T_i^{(1)})$ and $\phi_H(T_i^{(2)})$ are zero-sum, whence $|T_i^{(2)}|\le \frac{n_2}{2}\le n_2-n_1$, a contradiction to $0\not\in \Sigma_{\le n_2-n_1}(\phi_H(T))$.
		Thus for each $i\in [1,k]$, the sequence $\phi_H(T_i)$ is a minimal zero-sum subsequence of length $n_2=|G/H|$, ensuring by Lemma \ref{le-cyc} that $G/H$ must be cyclic.
		It follows from Lemma \ref{le-cyc} that there exists $e_2\in G$  such that  $\phi_H(e_2)$ is a generator of $G/H$ and $\phi_H(T_1)=\phi_H(e_2)^{n_2}$.
		Assume that there exists $j\in [2,k]$ such that $\phi_H(T_j)\neq \phi_H(e_2)^{n_2}$, then there exists $s\in [2, n_2-1]$ with $\gcd(s,n_2)=1$ such that $\phi_H(T_j)= (s\phi_H(e_2))^{n_2}$. Note that $s\ge 2$. By letting $t\in \N$ be minimal such that $t(s-1)\ge n_1$, we have $\phi_H(e_2)^{n_2-ts}(s\phi_H(e_2))^t\t \phi_H(T_1T_j)$ is zero-sum of length $n_2-ts+t\le n_2-n_1$, a contradiction to $0\not\in \Sigma_{\le n_2-n_1}(\phi_H(T))$. Therefore $\phi_H(T)=\phi_H(e_2)^{kn_2}$.
		Moreover, $\ord(\phi_H(e_2))=n_2$ ensures that $\ord(e_2)$ is a multiple of $n_2=\exp(G)$, which is the maximal order of an element from $G$. This forces $\ord(e_2)=\ord(\phi_H(e_2))=n_2$, which combined with $G=\la e_1,e_2\ra$ and $\ord(e_1)=n_1$ ensures that $G=\la e_1\ra\oplus \la e_2\ra$ with $\ord(e_2)=n_2$.

		Let $\pi_2:G\rightarrow \la e_2\ra$ be the projection homomorphism (with kernel $H=\la e_1\ra$) given by $\pi_2(xe_1+ye_2)=ye_2$. Since we now know $H=\la e_1\ra$ is a direct summand in $G$, we can identify $\pi_2$ with $\phi_H$, whence $\pi_2(T)=e_2^{kn_2}$, ensuring
		$\supp(T)\subset \la e_1\ra+e_2$. Combined with \eqref{U-hug}, the assertion now readily follows from  $U$ being zero-sum.

	2.	Suppose there exists $e_2\in \supp(U)$ with $\ord(e_2)=n_2$ and $\mathsf v_{e_2}(U)\ge kn_2-1$. Then 
		\[
		U=e_2^{kn_2-1}T, \text{ where } T\in \mathcal F(G) \text{ with }|T|=n_1\,.
		\]
		Since $e_2^{(k-1)n_2}$ is a product of $k-1$ zero-sum subsequences of length $n_2$, 
		it follows from $U\in \mathcal M_k(G)$ that $e_2^{n_2-1}T$ must be a minimal zero-sum sequence. Let $$H=\la e_2\ra$$ and since $\exp(G)=n_2$, we have $H$ is a direct summand in $G$ and hence there exists $e_1\in G$ with $\ord(e_1)=n_1$ such that $G=H\oplus \langle e_1\rangle$.
		Let $\pi_1:G\rightarrow \la e_1\ra$ be the projection homomorphism (with kernel $H=\la e_2\ra$) given by $\pi_2(xe_1+ye_2)=xe_1$.
		Then $\pi_1(T)$ is zero-sum over $G/H$ of length $n_1$.
		Assume that $\pi_1(T)$ is not minimal. Then $T=T^{(1)}T^{(2)}$ with both  $\pi_1(T^{(1)})$ and $\pi_1(T^{(2)})$  nontrivial zero-sum. Say $\sigma(T^{(1)})=se_2$ for some $s\in [1, n_2]$. Then $e_2^{n_2-s}T^{(1)}$ is a proper nontrivial zero-sum subsequence of $e_2^{n_2-1}T$, a contradiction. Thus $\pi_1(T)$ is a minimal zero-sum sequence over $\langle e_1\rangle$ of length $n_1$, and hence there exists $s\in [1, n_1-1]$ with $\gcd(s, n_1)=1$ such that $\pi_1(T)=(se_1)^{n-1}$. By replacing the basis $(e_1,e_2)$ with $(se_1,e_2)$, the assertion now readily follows from  $U$ being zero-sum.	
\end{proof}

\bigskip
Now we are ready to prove our main Theorem \ref{main}.

\begin{proof}[Proof of Theorem \ref{main}]
	Let $$n=n_1\;\und\; n_2=mn,\quad\mbox{ with $m\geq 2$}.$$
	Then $$G=C_n\oplus C_{mn}.$$ 
	
	Suppose $k=1$. By Theorem \ref{inverse}, it suffices to show that 
	type II sequences in Theorem \ref{inverse} is equivalent to type III and type IV sequences in Theorem \ref{main}.
	
	Let $S=f_1^{sn - 1} f_2^{(m-s)n+\epsilon}\prod_{i=1}^{n-\epsilon} ( -x_{i} f_1 +
	f_2)$ be a type II sequence in Theorem \ref{inverse}. 
	If $s=1$, then it is easy to see that $S$ is of type III in Theorem \ref{main}. If $s\ge 2$, then II.(d) in Theorem \ref{inverse} implies that $nf_1=nf_2$. Since $(f_1,f_2)$ is a generating set with $\ord(f_2)=mn$, we obtain  $(f_2-f_1,f_1)$ is basis of $G$ with $\ord(f_2-f_1)=n$ and $\ord(f_1)=mn$. Set $g_1=f_1$ and $g_2=f_2-f_1$. Then 
	\[
	S=g_1^{sn - 1} (g_1+g_2)^{(m-s)n+\epsilon}\prod_{i=1}^{n-\epsilon} ( -x_{i} g_1 +
	g_1+g_2)=g_1^{sn - 1} \prod_{i=1}^{(m-s+1)n} ( (1-x_{i}) g_1 + g_2)\,,
	\]
	where $x_i=0$ for all $i\in [n-\epsilon+1, (m-s+1)n]$,
	whence $S$ is of type IV in Theorem \ref{main}. 
	
	For the inverse, let $S$ be a type III or type IV sequence in Theorem \ref{main}. By letting $n-\epsilon$ be the number of $x_i$'s that is not zero, it is to easy to see that $S$ is a type II sequence in Theorem \ref{inverse}.

	 Now we assume that $k\ge 2$.
	Since $\mathsf D_k(G)=kn_2+n_1-1$ by Theorem \ref{th-gen}, we see that all sequences given in (I), (II), (III), or (IV) have length $\mathsf D_k(G)$.
	It is straightforward to check that 
	any sequence $U$ satisfying the conditions given in (I) or (II) 
	has $0\not\in \Sigma_{\le mn-1}(U)$, whence $U\in \mathcal M_k(G)$ follows from Proposition \ref{pr-key}.
	 Let us next verify that type III and type IV sequences $U$ have $0\not\in \Sigma_{\le mn-1}(U)$, and then $U\in \mathcal M_k(G)$ follows from Proposition \ref{pr-key}.
	
	Let $U$ be a type III sequence. Consider a nontrivial minimal zero-sum subsequence $T\mid U$. It is sufficient to show $|T|\ge n_2$.
	 After renumbering if necessary, we may assume that $T=g_1^u\prod_{i=1}^{v}(-x_ig_1+g_2)$, where $u\in [0, n_1-1]$ and $v\in [0, kn_2]$. Thus $0=\sigma(T)=(u-\sum_{i=1}^vx_i)g_1+vg_2$.
	Since $(g_1,g_2)$ is a generating set with $\ord(g_2)=n_2$, we obtain $u-\sum_{i=1}^vx_i$ is a multiple of $n_1$. 
	 It follows from $|u-\sum_{i=1}^vx_i|\in [0, n-1]$ that $u-\sum_{i=1}^vx_i=0$ and hence $v$ is a multiple of $\ord(g_2)$. Since $v=0$ implies $u=0$ and hence $|T|=u+v=0$, it follows from $T$ is nontrivial that $v\ge \ord(g_2)=n_2$ and hence $|T|\ge v\ge n_2$.
	
	Let $U$ be a type IV sequence. Consider a nontrivial minimal zero-sum subsequence $T\mid U$. It is sufficient to show $|T|\ge n_2$.
	 Suppose $$T=e_1^a \prod_{i\in I}((1-x_i)e_1+e_2)$$ for some $a\in [0,sn-1]$ and nonempty $I\subset [1,(km-s+1)n]$  with $n\mid |I|$. 
	By considering the sum of $e_1$-coordinates, it follows that $a+|I|-\sum_{i\in I}x_i\equiv 0\mod mn$, and hence $a\equiv \sum_{i\in I}x_i \mod n$. Set $|I|=s_1n$ and $a=s_2n+\sum_{i\in I}x_i$, where $s_1\in [1, km-s+1]$ and $s_2\in [0, s-1]$.
	Then $(s_1+s_2)n=a+|I|-\sum_{i\in I}x_i\equiv 0\mod mn$, whence $s_1+s_2\equiv 0 \mod m$. 
	It follows from $$ s_1+s_2\ge 1\ \text{ and }\ |T|=a+|I|=(s_1+s_2)n+\sum_{i\in I}x_i\le \mathsf D(G)=mn+n-1 \text{(by Theorem \ref{th-gen})}$$ that $s_1+s_2=m$ and $|T|\ge mn=n_2$.

		\bigskip
	It remains to show that every sequence in $\mathcal M_k(G)$ must have the form either given by (I), (II), (III), or (IV).

	\bigskip
	
	Let $U\in \mathcal M_k(G)$  of length $|U|=kmn+n-1$
	and suppose 
	\begin{equation}\label{assume}
		U\ \  \text{does not have the form of type I  or II.}
		\end{equation}
	We need to show that $U$ has the form of type III or IV.
	
	Let
	$\varphi:G\rightarrow G$ be a homomorphism with
	$$\varphi(G)=\mathsf {im} \varphi \cong C_n\oplus C_n\;\und\; \ker \varphi\cong C_m.$$ For instance, if $(e_1,e_2)$ were a basis for $G$ with $\ord(e_1)=n$ and $\ord(e_2)=mn$, then the map $xe_1+ye_2\mapsto xe_1+yme_2$ is one such a map.
	
	We define a \textbf{block decomposition}
	\index{block decomposition} of $U$ to be
	a tuple $W=(W_0,W_1,\ldots,W_{km-1})$, where
	$$U=W_0
	W_1 \cdot \ldots\cdot W_{km-1}$$ with each $\varphi(W_i)$ a nontrivial zero-sum for $i\in [0,km-1]$.
	
		\begin{claim}\label{sch-A1}
		Let $W=(W_0,\ldots, W_{km-1})$ be a block decomposition of $U$. 
		\begin{enumerate}
			\item[1.] For all $i\in [0,  km-1]$, we have $\varphi(W_i)$ is minimal, $\sigma(W_i)$ is a generator of $\ker(\varphi)$, and 
			$$\sigma(W_0)\cdot\ldots\cdot \sigma(W_{km-1})=\sigma(W_0)^{km}\,.$$
			
			\item[2.] If there exist subsequences $S\t W_i$ and $T\t W_j$ with $i\neq j$ such that $\sigma(\varphi(S))=\sigma(\varphi(T))$, then $\sigma(S)=\sigma(T)$.
			
			\item[3.] If there are distinct blocks $W_i$ and $W_j$ having terms $g\in \supp(W_i)$ and $h\in \supp(W_j)$ with $\varphi(g)=\varphi(h)$, then  
			all terms from $U$ equal to $\varphi(g)$ are equal.
		\end{enumerate}
	\end{claim}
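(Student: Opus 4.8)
The plan is to transport everything to the cyclic kernel $\ker\varphi \cong C_m$ and invoke the already-established cyclic case, Theorem~\ref{le-cyclic}. For a block decomposition $W=(W_0,\ldots,W_{km-1})$ each $\varphi(W_i)$ is a nontrivial zero-sum over $\mathsf{im}\,\varphi$, so $\sigma(W_i)\in\ker\varphi$, and the sequence $S_\sigma:=\sigma(W_0)\cdot\ldots\cdot\sigma(W_{km-1})$ is a zero-sum sequence over $\ker\varphi$ of length $km=\mathsf D_k(C_m)$. The first key point is that $S_\sigma\in\mathcal M_k(\ker\varphi)$: any partition of $S_\sigma$ into $k+1$ nontrivial zero-sum subsequences is just a partition of the index set $[0,km-1]$, and grouping the corresponding blocks $W_i$ would split $U$ into $k+1$ nontrivial zero-sum subsequences, contradicting $U\in\mathcal M_k(G)$. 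Hence Theorem~\ref{le-cyclic} applies and gives a generator $g$ of $\ker\varphi$ with $S_\sigma=g^{km}$; that is, $\sigma(W_i)=g$ for every $i$, which yields simultaneously the generator assertion and the product formula $\sigma(W_0)\cdot\ldots\cdot\sigma(W_{km-1})=\sigma(W_0)^{km}$ of part~1.

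For the minimality of each $\varphi(W_i)$ I would argue by contradiction. If $\varphi(W_i)=\varphi(A)\varphi(B)$ with $A,B$ nonempty and $\varphi(A),\varphi(B)$ both zero-sum, then $\sigma(A)=ag$, $\sigma(B)=bg$ with $a+b\equiv 1\pmod m$, since $\sigma(W_i)=g$ and $\ord(g)=m$. Replacing the block $W_i$ by the two blocks $A,B$ produces a zero-sum sequence over $\ker\varphi$ of length $km+1$, and it suffices to partition it into exactly $k+1$ nontrivial zero-sum subsequences and lift back to $U$. This is an elementary splitting over $C_m$: if $a=0$ (resp.\ $b=0$) the singleton block of sum $0$ is one piece and the remaining $g^{km}$ splits into $k$ copies of $g^m$; if $a,b\ge 1$ then necessarily $a+b=m+1$, and the pieces $(ag)g^{m-a}$, $(bg)g^{m-b}$ together with $k-1$ copies of $g^m$ exhaust the sequence into $k+1$ pieces. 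In either case lifting contradicts $U\in\mathcal M_k(G)$, so every $\varphi(W_i)$ is minimal.

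Parts 2 and 3 then follow by a swapping trick feeding back into part~1. Given \emph{nonempty} $S\t W_i$ and $T\t W_j$ with $i\neq j$ and $\sigma(\varphi(S))=\sigma(\varphi(T))$, put $S'=S^{-1}W_i$ and $T'=T^{-1}W_j$; since $\varphi(W_i),\varphi(W_j)$ are zero-sum, both $\varphi(ST')$ and $\varphi(TS')$ are zero-sum, and $(ST')(TS')=W_iW_j$, so replacing $W_i,W_j$ by $ST',TS'$ gives another block decomposition $W'$. Applying the already-proven part~1 to $W'$, all its block-sums equal a single generator, and since $W'$ retains the unchanged blocks $W_l$ ($l\neq i,j$), this common value is $g$. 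Thus $\sigma(ST')=g=\sigma(W_j)=\sigma(T)+\sigma(T')$, while also $\sigma(ST')=\sigma(S)+\sigma(T')$, whence $\sigma(S)=\sigma(T)$. For part~3, apply part~2 to the single-term subsequences $S=g$ and $T=h$ to get $g=h$; for an arbitrary term $t\t U$ with $\varphi(t)=\varphi(g)$, at least one of the blocks $W_i,W_j$ differs from the block containing $t$, so part~2 forces $t$ to equal $g$, proving all terms of $U$ in that $\varphi$-fibre are equal.

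The main obstacle is the minimality step of part~1: a crude length count only yields $k$ disjoint zero-sum subsequences over $C_m$ (because $km+1<(k+1)m$), so one genuinely needs the explicit partition into $k+1$ pieces, with the case distinction according to whether $\sigma(A)$ or $\sigma(B)$ vanishes. A secondary point of care concerns part~2: the statement must be read with $S,T$ nonempty, since for $S$ empty and $T=W_j$ the hypothesis $\sigma(\varphi(S))=\sigma(\varphi(T))=0$ holds while $\sigma(S)=0\neq g=\sigma(T)$; the swap argument above is valid precisely because nonemptiness of $S,T$ keeps the two new blocks nontrivial.
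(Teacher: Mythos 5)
Your proof is correct and follows essentially the same route as the paper: the block sums form a zero-sum sequence of length $km=\mathsf D_k(\ker\varphi)$ that must lie in $\mathcal M_k(\ker\varphi)$ (by grouping blocks), Theorem~\ref{le-cyclic} then pins all block sums to a single generator, and parts 2 and 3 follow by the identical swap argument with the unchanged blocks fixing the common value. The only divergences are cosmetic: where the paper deduces the $(k+1)$-partition of the augmented length-$(km+1)$ sequence directly from $km+1>\mathsf D_k(\ker\varphi)$ together with $\mathsf D_k=\max\{|S|\colon S\in\mathcal M_k\}$, you construct that partition by hand over $C_m$, and your observation that $S,T$ must be nonempty in part 2 (otherwise $S$ trivial, $T=W_j$ gives a degenerate counterexample) is a correct reading of the paper's implicit convention, matching how the claim is actually applied.
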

	\begin{proof}[Proof of \ref{sch-A1}]
		1. We have $W_{\sigma}:=\sigma(W_0)\cdot\ldots\cdot \sigma(W_{km-1})$ is a sequence over $\ker(\varphi)$ of length $km=\mathsf D_k(\ker(\varphi))$. Since $U\in \mathcal M_k(G)$, we have $W_{\sigma}\in \mathcal M_k(\ker(\varphi))$.
		It follows from Theorem \ref{le-cyclic} that $W_{\sigma}=\sigma(W_0)^{km}$ with $\sigma(W_0)$  a generator of $\ker(\varphi)$.
		
		Assume to the contrary that there exists some $i\in [0, km-1]$ such that $\varphi(W_i)$ is not minimal. Then we can decompose $W_i=W_i^{(1)}W_i^{(2)}$ such that both $\varphi(W_i^{(1)})$ and  $\varphi(W_i^{(2)})$ are nontrivial zero-sum. It follows that $W_{\sigma}^*:=\sigma({W_i})^{-1}\sigma(W_i^{(1)})\sigma(W_i^{(2)})W_{\sigma}$ is a sequence over $\ker(\varphi)$ of length $km+1>\mathsf D_k(\ker(\varphi))$, whence $W_{\sigma}^*\not\in \mathcal M_k(\ker(\varphi))$, a contradiction to  $U\in \mathcal M_k(G)$.
		
		2. Suppose there exist subsequences $S\t W_i$ and $T\t W_j$ with $i\neq j$ such that $\sigma(\varphi(S))=\sigma(\varphi(T))$. Then 
		 we can define $W'_i=S^{-1} W_i T$ and $W'_j=T^{-1} W_j S$. Setting $W'_s=W_s$ for all $s\neq i,j$, we then obtain a new  block decomposition $W'=(W'_0,W'_1,\ldots,W'_{km-1})$ with associated sequence $W'_\sigma=\sigma(W_i)^{-1} \sigma(W_j)^{-1} W_\sigma \sigma(W'_i) \sigma(W'_j)$. Since $k\ge 2$ and $m\ge 2$, we have $km-1\ge 3$ and it follows by Item 1 that $W'_{\sigma}=\sigma(W_s)^{km}$ for some $s\neq i,j$, whence $W'_{\sigma}=W_{\sigma}$. Therefore $\sigma(W_i')=\sigma(W_i)$, ensuring $\sigma(S)=\sigma(T)$.
		
		3. Suppose there are distinct blocks $W_i$ and $W_j$ having terms $g\in \supp(W_i)$ and $h\in \supp(W_j)$ with $\varphi(g)=\varphi(h)$. It follows by Item 2  that $g=h$. In such case, the assertion follows by doing this for all $g$ and $h$ contained in distinct blocks with $\varphi(g)=\varphi(h)$.
		\qedhere (\ref{sch-A1})
	\end{proof}

	  Since $U\in \mathcal M_k(G)$, we have  $\varphi(U)\in \mathcal M_{km}(\varphi(G))$ by Lemma \ref{le-epi}.
	 In view of Proposition \ref{pr-key}, we have that
	\be\label{notn-1} 0\notin \Sigma_{\leq n-1}(\varphi(G)).\ee
	Hence, since $\varphi(G)\cong C_n\oplus C_n$, we conclude from Lemma \ref{le-key} that there is some basis $(\overline e_1,\overline e_2)$ for $\varphi(G)\cong C_n\oplus C_n$ such that either
	\be\label{1st-popsicle} \supp(\varphi(U))\subset \{\overline e_1\}\cup\big(\la \overline e_1\ra+\overline e_2\big),\ee or else
	\be\label{2nd-popsicle} \varphi(U)=(\overline e_1)^{an} (\overline e_2)^{bn-1} (u\overline e_1+\overline e_2)^{cn-1} (u\overline e_1+2\overline e_2),\ee for some $u\in [2,n-2]$ with $\gcd(u,n)=1$, and some $a,b,c\geq 1$.

  We distinguish two cases depending on whether \eqref{1st-popsicle} or \eqref{2nd-popsicle} holds.

	\smallskip
	\noindent
	CASE 1: $\varphi(U)=(\overline e_1)^{an} (\overline e_2)^{bn-1} (u\overline e_1+\overline e_2)^{cn-1} (u\overline e_1+2\overline e_2),$ for some $u\in [2,n-2]$ with $\gcd(u,n)=1$, and some $a,b,c\geq 1$.

	Since $u\in [2,n-2]$ with $\gcd(u,n)=1$, it follows that   $n\geq 5$.
	In view of the hypothesis of CASE 1, we have  $(a+b+c)n-1=|U|=kmn+n-1$, implying \be\label{abcs}a+b+c=km+1.\ee Set $$\overline e_3=u\overline e_1+\overline e_2, \quad\mbox{ so } \quad \overline e_2=(n-u)\overline e_1+\overline e_3,$$
	and note that $\overline e_1=u^*(\overline e_2-\overline e_3)$, where $u^*\in [2,n-2]$ is the multiplicative inverse of $-u$ modulo $n$, so $$u^*u\equiv -1\mod n \quad\mbox{ with $u^*\in [2,n-2]$}.$$

	In view of the hypothesis of CASE 1, there is a block decomposition $W=(W_0,W_1,\ldots, W_{km-1})$ of $U$ with  \begin{align*} \varphi(W_0)=(\overline e_1)^{n-1} (\overline e_2)^{u^*} (\overline e_3)^{n-u^*},\quad
		&\varphi(W_1)=(\overline e_3)^{u^*-1} (\overline e_2)^{n-u^*-1} \overline e_1 (\overline e_2+\overline e_3),\quad \und\\
		&\varphi(W_i)\in \{(\overline e_1)^{n},\,(\overline e_2)^{n},\, (\overline e_3)^{n}\} \quad\mbox {for $i\in [2,km-1]$}.\end{align*}
	Let $z\in \supp(\varphi(U))=\{\overline e_1,\overline e_2,\overline e_3, \overline e_2+\overline e_3\}$. If $z=\overline e_2+\overline e_3$, then we trivially have $g=h$ for all $g,h\in\supp(U)$ with $\varphi(g)=\varphi(h)=z$, since there is a unique term $g\in \supp(U)$ with $\varphi(g)=z$. Otherwise, since
	$\vp_{\overline e_j}(W_i)>0$ for all $j\in[1,3]$ and $i\in [0,1]$, it follows
	that there are distinct block $W_i$ and $W_j$, for some $i,j\in [0,km-1]$, with terms $g\in \supp(W_i)$ and $h\in \supp(W_j)$ such that $\varphi(g)=\varphi(h)=z$. Then \ref{sch-A1}.3 implies 
	$$g,h\in \supp(U)\; \mbox{ with $\varphi(g)=\varphi(h)$ \; implies $g=h$}.$$
	As a result, we can find representatives $e_1$ and $e_2$ for $\overline e_1$ and $\overline e_2$, and $\alpha,\beta\in \ker\varphi$, such that
	$$\supp(U)=\{e_1, \ e_2, \ e_3+\alpha, \ e_2+e_3+\beta\},$$
	where $e_3:= ue_1+e_2$, \ $\varphi(e_1)=\overline e_1$, \ $\varphi(e_2)=\overline e_2$, \ $\varphi(e_3+\alpha)=\overline e_3=u\overline e_1+\overline e_2$, and $\varphi(e_2+e_3+\beta)=\overline e_2+\overline e_3=u\overline e_1+2\overline e_2$.
	
	Since $u,u^*\in [2,n-2]$, it follows that there are subsequences $e_1^u e_2\mid W_0$ and $e_3+\alpha=ue_1+e_2+\alpha\mid W_1$. By \ref{sch-A1}.2, we have $ue_1+e_2=\sigma(e_1^u e_2)=ue_1+e_2+\alpha$, whence $\alpha=0$. Likewise, there are subsequences $e_1^u e_2^2\mid W_0$ and $e_2+e_3+\beta=ue_1+2e_2+\beta\mid W_1$. By \ref{sch-A1}.2, we have $ue_1+2e_2=\sigma(e_1^u e_2^2)=ue_1+2e_2+\beta$, whence $\beta=0$. As a result, $\supp(U)=\{e_1, \ e_2, \ ue_1+e_2, \ ue_1+2e_2\}$, which together with the hypotheses of CASE 1 gives
	\begin{align}\label{U-go}&U=e_1^{an} e_2^{bn-1} (ue_1+e_2)^{cn-1} (ue_1+2e_2),\\
		&W_0=e_1^{n-1} e_2^{u^*} (ue_1+e_2)^{n-u^*}\;\und\; W_1=e_1 e_2^{n-u^*-1} (ue_1+e_2)^{u^*-1} (ue_1+2e_2).\nn
	\end{align}
	
	From \eqref{U-go}, we have $\supp(U)\subset \la e_1,e_2\ra$. If this were a proper subgroup of $G=C_{n}\oplus C_{mn}$, then  $\mathsf D_k(G)=|U|\le \mathsf D_k(\la e_1,e_2\ra)<\mathsf D_k(G)$ (by Lemma \ref{le-subgroup}), a contradiction. Therefore we instead conclude that \be\label{keyboard}\la e_1,e_2\ra=G=C_n\oplus C_{mn}.\ee

	If $T\mid W_0 W_1$ is any proper, nontrivial subsequence with $\varphi(T)$ zero-sum, then we can set $W'_0=T$, define $W'_1$ by $W'_0 W'_1=W_0 W_1$ and set $W'_i=W_i$ for all $i\geq 2$ to thereby obtain a new block decomposition $W'$. Since $km\ge 4$, \ref{sch-A1}.1 ensures that $\sigma(W'_0)=g_0$, where $g_0:=\sigma(W_0)$ is a generator for $\ker\varphi\cong C_m$.
	This shows that  \be\label{sticky}\mbox{any proper, nontrivial subsequence $T\mid W_0 W_1$ with $\varphi(T)$ zero-sum has $\sigma(T)=g_0:=\sigma(W_0)$}.\ee
	In particular, since $e_1^n\mid W_0 W_1$  and $e_1^{n-u} e_2^{n-1} (ue_1+e_2)\mid W_0 W_1$, we have $$ ne_1=\sigma(e_1^{n-u} e_2^{n-1} (ue_1+e_2))=ne_1+ne_2=g_0,$$ 
	forcing $ne_2=0$. In view of $\ord(\overline e_2)=n$, we have $\ord(e_2)=n$.
	 Since $\mathsf v_{e_2}(U)=bn-1\ge n-1$, it follows from Lemma \ref{sch-I}.1 that $U$ has the form of type I, a contradiction to our assumption of \eqref{assume}.

	\smallskip
	\noindent
	CASE 2: $\supp(\varphi(U))\subset \{\overline e_1\}\cup\big(\la \overline e_1\ra+\overline e_2\big)$.
	
Let $W=(W_0,\ldots,W_{km-1})$ be a block decomposition of $U$.	We say $W$ is {\bf refined} if  $|W_i|\leq n$ for each $i\in [1,km-1]$.
Since $|U|=(km-2)n+3n-1\geq (km-2)n+3n-2=(km-2)n+\eta(\varphi(G))$ with $\sigma(U)=0$ by Theorem \ref{th-gen}, repeated application of the definition $\eta(\varphi(G))$ to the sequence $\varphi(U)$ shows that $U$ has a refined block decomposition.

 Let $W=(W_0,\ldots,W_{km-1})$ be a refined block decomposition of $U$. In view of \ref{sch-A1}.1, we have
 $\varphi(W_0)$ is a minimal zero-sum sequence of terms from $\varphi(G)\cong C_n\oplus C_n$, thus with $|W_0|\leq \mathsf D(C_n\oplus C_n)=2n-1$. Since each $|W_i|\leq n$ for $i\in [1,km-1]$, we have $2n-1\geq |W_0|=|U|-\Sum{i=1}{km-1}|W_i|\geq kmn+n-1-(km-1)n=2n-1$, forcing equality to hold in all estimates. In particular, we now conclude that
\begin{align}\label{blockhelp} |W_0|=2n-1\;\und\;|W_i|=n\quad\mbox{ for all $i\in [1,km-1]$},\end{align} for any refined block decomposition $W$, with $\varphi(W_0)$ always a minimal zero-sum of length $2n-1$.
	
 In view of the hypothesis of CASE 2, any zero-sum subsequence of $\varphi(U)$ must have the number of  terms from $\la \overline e_1\ra+\overline e_2$  congruent to $0$ modulo $n$. In particular,
	\begin{align}\label{W-cape}\varphi(W_0)=(\overline e_1)^{n-1}
		\prod_{i\in [1,n]}(-x_i\overline e_1+\overline e_2),\end{align} for some  $x_1,\ldots,x_n\in  \Z$  with $x_1+\ldots+x_n\equiv n-1\mod n$
	and, for every $j\in[1,km-1]$, either \begin{align}\label{steer}
		&\varphi(W_j)=(\overline e_1)^n\;\mbox{ or }\; \varphi(W_j)=\prod_{i\in [1,n]}(-y_{i}\overline e_1+\overline e_2),\end{align} for some $y_{1},\ldots,y_{n}\in \Z$ with   $y_{1}+\ldots+y_{n}\equiv 0\mod n$.

	\begin{claim}
		\label{sch-A2} There is some $e_1\in G$ such that every $g\in \supp(U)$ with $\varphi(g)=\overline e_1$ has $g=e_1$.
	\end{claim}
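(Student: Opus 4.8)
The plan is to deduce the claim from \ref{sch-A1}.3: for any two terms of $U$ lying in the fiber $\varphi^{-1}(\overline e_1)$ I will exhibit a block decomposition placing them in distinct blocks, whence they are equal, and their common value is the desired $e_1$. Every $g\in\supp(U)$ with $\varphi(g)=\overline e_1$ lies in the coset $\varphi^{-1}(\overline e_1)$, a set of $|\ker\varphi|=m$ elements, so a priori several distinct values could occur. If $\vp_{\overline e_1}(\varphi(U))=1$ there is a single such term and the claim is trivial, so I may assume $\vp_{\overline e_1}(\varphi(U))\ge 2$. Fixing a refined block decomposition $W=(W_0,\ldots,W_{km-1})$, by \eqref{W-cape} and \eqref{steer} the terms mapping to $\overline e_1$ occur only inside $W_0$ (exactly $n-1$ of them) and inside those blocks $W_j$, $j\ge 1$, with $\varphi(W_j)=(\overline e_1)^{n}$.

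First I would treat the case that some $W_j$ with $j\ge 1$ has $\varphi(W_j)=(\overline e_1)^n$. Choose a term $e_1\t W_0$ with $\varphi(e_1)=\overline e_1$ (possible as $n-1\ge 1$) and a term $h\t W_j$. Any $g\in\supp(U)$ with $\varphi(g)=\overline e_1$ lies in $W_0$, in $W_j$, or in another $(\overline e_1)^n$-block. If $g$ lies in a block other than $W_0$, then \ref{sch-A1}.3 gives $g=e_1$; if $g\t W_0$, comparing both $g$ and $e_1$ with $h$ via \ref{sch-A1}.3 yields $g=h=e_1$. Hence every such $g$ equals $e_1$.

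There remains the case that all $n-1$ terms mapping to $\overline e_1$ sit in the single block $W_0$, so that $\vp_{\overline e_1}(\varphi(U))=n-1\ge 2$ and thus $n\ge 3$, while each $W_j$ with $j\ge 1$ is a pure block of $\overline e_2$-coset terms. Here I would manufacture an alternative block decomposition that spreads the $\overline e_1$-terms over two blocks. Writing the $\overline e_2$-coset terms as $(c\overline e_1+\overline e_2)$'s, I would exchange one such term of $W_0$ with one of a block $W_j$ and then redistribute the $n-1$ terms mapping to $\overline e_1$ between the two resulting blocks; if the two exchanged terms have $\overline e_1$-coefficients differing by $d\in[2,n-1]$ modulo $n$, a short computation shows the two blocks then require $\overline e_1$-multiplicities $d-1\ge 1$ and $n-d\ge 1$, which sum to exactly $n-1$, so the redistribution is legitimate and produces two distinct blocks each carrying a term mapping to $\overline e_1$. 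Then \ref{sch-A1}.3 finishes as before.

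The hard part will be the rigid configurations admitting no such exchange, namely when the $\overline e_1$-coefficients of the $\overline e_2$-coset terms take a single value together with one outlier differing from it by $1$ modulo $n$; since $\vp_{\overline e_1}(\varphi(U))\equiv-1\pmod n$ forces at least two distinct coefficient values, this is the only obstruction. I would dispatch it directly: the $\overline e_2$-coset terms then populate all $km-1\ge 3$ of the blocks $W_1,\ldots,W_{km-1}$, so by \ref{sch-A1}.3 their lifts share a common value $e_2$, and pinning down $\ord(e_2)$ reveals that $U$ has the explicit shape handled by Lemma \ref{sch-I}, forcing $U$ to be of type I or II and contradicting \eqref{assume}. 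I expect this final reduction, rather than the exchange argument itself, to be where the real work lies.
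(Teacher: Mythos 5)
Your argument is correct, and it reaches the paper's conclusion by a genuinely different mechanism while sharing the overall skeleton. Both proofs observe that \ref{sch-A1}.3 finishes as soon as two distinct blocks contain terms of the fiber of $\overline e_1$, and both end the rigid case identically: all $kmn-1$ lifts of the majority coset element coincide in some $g_0$ by \ref{sch-A1}.3, whence $\ord(g_0)=mn$ (else $\ord(g_0)\le mn/2$ and $(k+1)\ord(g_0)\le kmn-1$ for $k\ge 2$, so $g_0^{(k+1)\ord(g_0)}\mid U$ contradicts $U\in\mathcal M_k(G)$), and Lemma \ref{sch-I}.2 then puts $U$ in type II, contradicting \eqref{assume} --- your ``pinning down $\ord(e_2)$'' is exactly this and is routine, not where the real work lies. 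The difference is the middle step: the paper stays inside the fixed refined decomposition and argues via \ref{sch-A1}.2, matching a subsequence $Tg\mid W_0$ against a term $h\mid W_i$ and then swapping fiber terms $f_1\leftrightarrow f_2$ between $T$ and $T^{-1}W_0$ to force $f_1=f_2$ directly; you instead rebuild the decomposition, trading one coset term between $W_0$ and $W_j$ and re-splitting the $n-1$ fiber terms as $(d-1)+(n-d)$, then quote \ref{sch-A1}.3. Your bookkeeping checks out: the coset coefficients of $W_0$ sum to $1\bmod n$ and those of $W_j$ to $0\bmod n$, so both new blocks are $\varphi$-zero-sum; your criterion $d=a-b\not\equiv 0,1 \pmod n$ is precisely the paper's condition $x\notin\{y,y+1\}\bmod n$; and although your modified decomposition is no longer refined, \ref{sch-A1} applies to arbitrary block decompositions, so nothing is lost. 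A payoff of your route is that the rigidity analysis collapses cleanly: if two distinct coefficient values occurred among the blocks $W_j$ with $j\ge 1$, the constraint $a-b\in\{0,1\}\bmod n$ would force all $W_0$-coset coefficients to a single value with sum $\equiv 0\not\equiv 1\bmod n$, so a single block value $y$ remains, and the sum constraint on $W_0$ pins exactly one outlier at $y+1$; consequently the paper's separate subcase of two terms above $(y+1)\overline e_1+\overline e_2$ in $W_0$ never arises for you. That short derivation (together with the order-pinning of $g_0$) is the only material you assert rather than prove, and both are routine but should be written out in a full version.
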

	
	\begin{proof}[Proof of \ref{sch-A2}]
	Let $W$ be a refined block decomposition.
	Then \eqref{W-cape} implies that $\vp_{\overline e_1}(\varphi(W_0))=n-1\geq 1$. If there exists $i\in [1, km-1]$ such that $\overline e_1\in \supp(\varphi(W_i))$, then the assertion follows by \ref{sch-A1}.3.
	Thus we may assume that $\supp(W_i)\subset \la \overline e_1\ra+\overline e_2$ for all $i\in [1,km-1]$.
	If $n=2$, the assertion is trivial. Suppose $n\ge 3$.  
		
		Let $i\in [1,km-1]$ and $h\in \supp(W_i)$ be arbitrary, say with $\varphi(h)=y\overline e_1+\overline e_2$. Suppose there is some $g\in \supp(W_0)$ with $\varphi(g)=x\overline e_1+\overline e_2$ and $x\notin \{y,y+1\}\mod n$. Then, letting $z\in [1,n-2]$ be the integer such that $z+x\equiv y\mod n$, it follows that there is  a subsequence $T g\mid W_0$ with $\varphi(T g)=(\overline e_1)^z (x\overline e_1+\overline e_2)$ and $\sigma(\varphi(T g))=(z+x)\overline e_1+\overline e_2=y\overline e_1+\overline e_2=\varphi(h)$.
		Note \ref{sch-A1}.2 implies that $$\sigma(T)+\sigma(g)=\sigma(h).$$ Since $|T|=z\in [1,n-2]$, there are terms $f_1\in \supp(T)$ and $f_2\in \supp(T^{-1} W_0)$ with $\varphi(f_1)=\varphi(f_2)=\overline e_1$.
		Repeating this argument using the subsequence $T'=f_1^{-1} T f_2$ in place of $T$, we again find that $$f_2-f_1+\sigma(h)=f_2-f_1+\sigma(T)+\sigma(g)=\sigma(T')+\sigma(g)=\sigma(h),$$ implying that $f_1=f_2$. Doing this for all $f_1\in \supp(T)$ and $f_2\in \supp(T^{-1} W_0)$ with $\varphi(f_1)=\varphi(f_2)=\overline e_1$ would then yield the desired conclusion for \ref{sch-A2}. So we can instead assume every $g\in\supp(W_0)$ with $\varphi(g)\in \la \overline e_1\ra+\overline e_2$ has either $\varphi(g)=y\overline e_1+\overline e_2$ or
		$\varphi(g)=(y+1)\overline e_1+\overline e_2$. In view of \eqref{W-cape}, both possibilities must occur (since $x_1+\ldots+x_n\equiv 1\mod n$ in \eqref{W-cape}), forcing \be\label{force1}\supp(\varphi(W_0))=\{\overline e_1,\ y\overline e_1+\overline e_2, \ (y+1)\overline e_1+\overline e_2\}.\ee
		Moreover, the above must be true for any $i\in [1,km-1]$ and $h\in \supp(W_i)$. Since $n\geq 3$, the value of $y$ is uniquely forced by \eqref{force1}, which means  \be\label{morgor}\varphi(W_i)=(y\overline e_1+\overline e_2)^n\quad\mbox{ for all $i\in [1,km-1]$}.\ee
		
		Suppose there are two terms $g_1 g_2\mid W_0$ with $\varphi(g_1)=\varphi(g_2)=(y+1)\overline e_1+\overline e_2$. Let $h_1 h_2\mid W_1$ be a length two subsequence. Then there is a subsequence $T g_1 g_2\mid W_0$ with $\varphi(T g_1 g_2)=(\overline e_1)^{n-2} ((y+1)\overline e_1+\overline e_2)^2$. Thus $\sigma(\varphi(T g_1 g_2))=\sigma(\varphi(h_1 h_2))$ and  by \ref{sch-A1}.2, we conclude that $\sigma(T)+g_1+g_2=h_1+h_2$. Since $1\leq |T|=n-2<n-1$, we can find $f_1\in \supp(T)$ and $f_2\in \supp(T^{-1} W_0)$ with $\varphi(f_1)=\varphi(f_2)=\overline e_1$ and argue as before to conclude that $f_1=f_2$. Doing this for all $f_1$ and $f_2$ then yields the desired conclusion for \ref{sch-A2}. So we can instead assume that $\vp_{(y+1)\overline e_1+\overline e_2}(\varphi(U))=1$, implying via \eqref{force1} and \eqref{W-cape} that
		$\vp_{y\overline e_1+\overline e_2}(\varphi(W_0))=n-1$. Combined with \eqref{morgor}, we find that $\vp_{y\overline e_1+\overline e_2}(\varphi(U))=kmn-1$. Moreover, by \ref{sch-A1}.3, all $kmn-1$ of the corresponding terms from $U$ must be equal to the same element (say) $g_0\in G$, whence $\vp_{g_0}(U)\geq kmn-1$, forcing $\ord(g_0)=mn$ else  $\vp_{g_0}(U)\ge (k+1)\ord(g_0)$ implies $U$ contains $k+1$ disjoint zero-sum subsequences of length $\ord(g_0)$, contradicting the hypothesis that $U\in \mathcal M_k(G)$. Applying Lemma \ref{sch-I} shows  $U$ has the form of type II, a contradiction to our assumption \eqref{assume}.\qedhere (\ref{sch-A2})
	\end{proof}

	In view of \ref{sch-A2},
	we can decompose $$U=e_1^{\mathsf v_{\overline e_1}(\varphi(U))} U^*$$ with  $U^*\mid U$ the subsequence consisting of all terms $g$ with $\varphi(g)\in\la \overline e_1\ra+\overline e_2$ (view those as $U^*$-terms).	
	Note that $\vp_{e_1}(U)\geq \vp_{e_1}(W_0)= n-1$. If $\ord(e_1)=n$, then Lemma \ref{sch-I}.1 implies that $U$ has the form of type I, a contradiction to our assumption \eqref{assume}. Thus $\ord(e_1)>n$. Since $\varphi(e_1)=\overline e_1$ with $\ord(\overline e_1)=n$, it follows that $\ord(e_1)$ is a  multiple of $n$. Thus
	\be\label{ge_2n}\ord(e_1)\geq 2n.\ee
	
	If $\varphi(W_j)=(\overline e_1)^n$ for all $j\in [1, km-1]$, then 
	$\vp_{e_1}(U)=n-1+(km-1)n=kmn-1$ follows from \ref{sch-A2}. Since $U\in \mathcal M_k(G)$, we have $\vp_{e_1}(U)=kmn-1\le k\ord(e_1)$, ensuring $\ord(e_1)=mn$. Now Lemma \ref{sch-I}.2 implies that $U$ has the from of type II, a contradiction to our assumption \eqref{assume}.
	Thus we can assume 
	\be\label{at_least_one}
	\text{there is at least one block $W_j$ with $j\geq 1$ containing some $U^*$-term.}
	\ee
	 Let $e_2\t W_j$ be some $U^*$-term.
	 Then  $\varphi(e_2)\in \la \overline e_1\ra+\overline e_2$.
	We note that the hypotheses of CASE 2 hold with the basis $(\overline e_1,\overline e_2)$ replaced by the basis $(\varphi (e_1),\varphi (e_2))$. Thus, by replacing $\overline e_2$ by  $\varphi (e_2)$, we may assume that $\varphi(e_2)=\overline e_2$.
	Let $I=[0,n-1]$ be the discrete interval of length $n$.  Each $U^*$-term $g$  can be written uniquely as $g=-\iota(g)e_1+e_2+\psi(g)$ for some $\iota(g)\in I\subset \Z$ and $\psi(g)\in \ker\varphi$.

	\begin{claim}
		\label{sch-A3} Suppose $W=(W_0,\ldots, W_{km-1})$ is a  refined block decomposition for $U$. If $g\in \supp(W_0)$ and $h\in \supp(W_j)$ are  $U^*$-terms, where $j\geq 1$, then
		$$\psi(g)-\psi(h)=\left\{
		\begin{array}{ll}
			0 & \hbox{if $\iota(g)\geq \iota(h)$,} \\
			-ne_1 & \hbox{if $\iota(g)<\iota(h)$.}
		\end{array}
		\right.$$
	\end{claim}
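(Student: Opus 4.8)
The plan is to exploit the cancellation principle \ref{sch-A1}.2 by padding the $U^*$-term $g$ with a suitable number of copies of $e_1$ inside $W_0$, so that the padded subsequence acquires the same $\varphi$-image sum as $h$; then \ref{sch-A1}.2 upgrades this to an equality of actual group-sums, from which the value of $\psi(g)-\psi(h)$ falls out. First I collect the needed material. By \eqref{W-cape} the block $W_0$ carries exactly $n-1$ terms mapping to $\overline e_1$, and by \ref{sch-A2} each of them equals $e_1$, so $e_1^{n-1}\t W_0$; moreover $e_1$ is itself never a $U^*$-term, since $\overline e_2\neq 0$ in $\varphi(G)\cong C_n\oplus C_n$ forces $\overline e_1\notin \la \overline e_1\ra+\overline e_2$. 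Thus $g$ is disjoint from these copies of $e_1$, and $e_1^t g\t W_0$ for every $t\in[0,n-1]$. Write $\varphi(g)=-\iota(g)\overline e_1+\overline e_2$ and $\varphi(h)=-\iota(h)\overline e_1+\overline e_2$ with $\iota(g),\iota(h)\in[0,n-1]$.

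Now I split into the two cases. If $\iota(g)\geq \iota(h)$, set $d=\iota(g)-\iota(h)\in[0,n-1]$ and take the subsequences $S=e_1^d g\t W_0$ and $T=h\t W_j$; since $\sigma(\varphi(S))=(d-\iota(g))\overline e_1+\overline e_2=-\iota(h)\overline e_1+\overline e_2=\sigma(\varphi(T))$, \ref{sch-A1}.2 gives $\sigma(S)=\sigma(T)$, i.e. $g+de_1=h$. If instead $\iota(g)<\iota(h)$, set $d=\iota(h)-\iota(g)\in[1,n-1]$, so that $n-d\in[1,n-1]$, and take $S=e_1^{n-d} g\t W_0$ and $T=h\t W_j$; here $\sigma(\varphi(S))=(n-d-\iota(g))\overline e_1+\overline e_2=-\iota(h)\overline e_1+\overline e_2=\sigma(\varphi(T))$ (recall $\ord(\overline e_1)=n$), and \ref{sch-A1}.2 gives $g+(n-d)e_1=h$. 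In both cases the padded sequence is a proper nonempty subsequence of $W_0$ (as $|S|\leq n<2n-1=|W_0|$) and $T$ a proper nonempty subsequence of $W_j$, so \ref{sch-A1}.2 applies with $i=0\neq j$.

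Finally I translate back through $\psi(x)=x+\iota(x)e_1-e_2$, which yields $\psi(g)-\psi(h)=(g-h)+(\iota(g)-\iota(h))e_1$. In the first case $g-h=-de_1$ and $\iota(g)-\iota(h)=d$, so $\psi(g)-\psi(h)=0$; in the second case $g-h=-(n-d)e_1$ and $\iota(g)-\iota(h)=-d$, so $\psi(g)-\psi(h)=-ne_1$, as claimed. The computations are entirely routine, so the main point is really the choice of padding exponent: because $\iota$ ranges over $[0,n-1]$ while $\ord(\overline e_1)=n$, matching the $\overline e_1$-coordinates forces exactly $d$ copies of $e_1$ when no wrap-around is needed and $n-d$ copies when it is, and the only thing to check is that neither exponent exceeds the $n-1$ available copies in $W_0$, which holds since $d,\,n-d\leq n-1$. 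The resulting discrepancy $ne_1$ is genuinely nonzero because $\ord(e_1)\geq 2n$ by \eqref{ge_2n}, which is what makes the two cases substantively different.
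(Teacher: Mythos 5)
Your proof is correct and follows essentially the same route as the paper's: pad $g$ with $z\in[0,n-1]$ copies of $e_1$ from $W_0$ (where $z\equiv\iota(g)-\iota(h)\bmod n$), apply \ref{sch-A1}.2 to upgrade the equality of $\varphi$-sums to $\sigma(e_1^z g)=h$, and read off $\psi(g)-\psi(h)$, with the wrap-around case producing the $-ne_1$ discrepancy. Your extra verifications (that $e_1^{n-1}\t W_0$ via \eqref{W-cape} and \ref{sch-A2}, and that $g$ is not one of these copies) are points the paper leaves implicit, and they are accurate.
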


	\begin{proof}[Proof of \ref{sch-A3}]
		Let $\iota(g)=x$ and  $\iota(h)=y$,  and let $\psi(g)=\alpha$ and $\psi(h)=\beta$. Then $$g=-xe_1+e_2+\alpha\;\und\; h=-ye_1+e_2+\beta.$$
		If $x\geq y$, let $z=x-y\in [0,n-1]$. If $x<y$, let $z=x-y+n\in [1,n-1]$.
		In both cases, we have $\sigma(\varphi(e_1^{z} g))=(z-x)\overline e_1+\overline e_2=-y\overline e_1+\overline e_2=\varphi(h)$, so  \ref{sch-A1}.2 implies  that $(z-x)e_1+e_2+\alpha=\sigma(e_1^z g)=h=-ye_1+e_2+\beta$, whence $$\alpha-\beta=(x-y-z)e_1.$$
		If $x\geq y$, then $z=x-y$, implying $\psi(g)-\psi(h)=\alpha-\beta=0$. If $x< y$, then $z=x-y+n$, implying $\psi(g)-\psi(h)=\alpha-\beta=-ne_1$.\qedhere (\ref{sch-A3})
	\end{proof}
	
	Note that $e_2$ is a $U^*$-term of some $W_j$ with $\iota(e_2)=0$ and $\psi(e_2)=0$. Let $W_0^*\t W_0$ be the subsequence consisting of all $U^*$-terms. 
	Thus for every term $g$ of $W_0^*$, we have $\iota(g)\ge 0=\iota(e_2)$ and hence \ref{sch-A3} implies that $\psi(g)=\psi(e_2)=0$, that is, 
	\be\label{psi-W0}
	\text{ for every term $g$ of $W_0^*$, we have $\psi(g)=0$.}
	\ee

	Let $h$ be a $U^*$-term of $W_0^{-1}U$. Then there exists $j\in [1, km-1]$ such that $h\t W_j$. By \eqref{steer}, we have $\iota(h^{-1} W_0^{*} W_j)\mod n$ is a sequence of $2n-1$ terms from a cyclic group of order $n$, thus containing a zero-sum sequence of length $n$, say $\sigma(\iota(W'_j))\equiv 0\mod n$ with $W'_j\mid h^{-1} W_0^* W_j$ and  $|W'_j|=n$. Define $W'_0$ by $W'_0 W'_j=W_0 W_j$ and set $W'_i=W_i$ for all $i\neq 0,j$. Then $W'=(W'_0,W'_1,\ldots,W'_{km-1})$ is a refined block decomposition of $U$ with $h\in\supp(W'_0)$ by construction. Since $h\in \supp(W'_0)$, we must have $g\in\supp(W'_j)$ for some $g\in \supp(W_0^{*})$.
	If $\iota(g)>\iota(h)$, then, applying \ref{sch-A3} to the block decomposition $W$ and $W'$, it follows that $\psi(g)-\psi(h)=0$ and $\psi(h)-\psi(g)=-ne_1$, a contradiction to \eqref{ge_2n}.
	If $\iota(g)<\iota(h)$,  then, applying \ref{sch-A3} to the block decomposition $W$ and $W'$, it follows that $\psi(g)-\psi(h)=-ne_1$ and $\psi(h)-\psi(g)=0$, a contradiction to \eqref{ge_2n}. Therefore $\iota(g)=\iota(h)$ and by applying \ref{sch-A3} to the block decomposition $W$, we obtain $\psi(h)=\psi(g)=0$ by \eqref{psi-W0}. Therefore
	\be\label{iota}
	\begin{aligned}
	&\text{ for every $U^*$-term $h$ of $W_0^{-1}U$,we have $\psi(h)=0$ and }\\
	& \text{ there exists a term $g$ of $W_0^*$ such that $\iota(g)=\iota(h)$.}
	\end{aligned}
	\ee
	
	Note $e_2$ is a $U^*$-term of $W_0^{-1}U$. Then \eqref{iota} implies that there exists $g\t W_0^*$ such that $\iota(g)=\iota(e_2)=0$. 
Assume that there exists a $U^*$-term $h$ of $W_0^{-1}U$ such that $\iota(h)>0=\iota(g)$. In view of \eqref{iota} and \eqref{psi-W0}, we have $\psi(h)=\psi(g)=0$.
 By applying \ref{sch-A3} to the block decomposition $W$, we obtain $0=\psi(g)-\psi(h)=-ne_1$, a contradiction to \eqref{ge_2n}.
Thus for every $U^*$-term $h$ of $W_0^{-1}U$, we have $\iota(h)=0$
and combined with \eqref{iota}, we obtain $h=e_2$.
Therefore, for every $j\in [1,km-1]$, we either have \be\label{reeper}W_j=e_1^n\;\mbox{ or } \;W_j=e_2^n.\ee
	
	In view of \eqref{at_least_one}, we 
	let $$s\in [0,km-2]$$ be the number of blocks $W_j$ equal to $e_1^n$. It follows from \eqref{W-cape} and \eqref{steer} that
	\be\label{Ustarter}U=e_1^{(s+1)n-1} e_2^{(km-s-1)n}
	\prod_{i\in [1,n]}(-x_i e_1+e_2)\ee for some $x_1,\ldots, x_{n}\in [0,n-1]$ with $x_1+\ldots+ x_{n}\equiv n-1\mod n$. 
	
	 Assume that $x_1+\ldots+x_n\neq n-1$. Then $x_1+\ldots+x_n\ge  2n-1$ and hence 
	 there will be some minimal index $t\in [2,n-1]$ such that $x_1+\ldots+x_t\geq n$. By the minimality of $t$, we have $x_1+\ldots+x_{t-1}\leq n-1$, which combined with $x_t\in [1,n-1]$ ensures that $x_1+\ldots+x_{t}\in [1,2n-2]$. Hence $x_1+\ldots+x_t=n+r$ for some $r\in [0,n-2]$. Let  $j\in [1,km-1]$ such that $W_j=e_2^n$ (which exists by \eqref{at_least_one}). Since $\sigma\big(\varphi\big(e_1^r \prod_{i\in [1,t]}(-x_ie_1+e_2)\big)\big)=t\overline e_2=\sigma(\varphi(e_2^t))$, it follows from \ref{sch-A1}.2 that $-ne_1+te_2=\sigma(e_1^r \prod_{i\in [1,t]}(-x_1e_1+e_2))=\sigma(e_2^t)=te_2$, whence $ne_1=0$, contradicting  \eqref{ge_2n}. So we instead conclude that
	\be\nn x_1+\ldots+x_n=n-1.\ee
	
	As already noted, there is some block $W_j=e_2^n$, with $j\in [1,km-1]$. By \ref{sch-A1}.1, we obtain $ne_2=\sigma(W_j)=g_0$ is a generator for $\ker(\varphi)\cong C_m$, ensuring that $\ord(ne_2)=\ord(g_0)=m$. We also have $\varphi(e_2)=\overline e_2$ with $\ord(\overline e_2)=n$, ensuring that $\ord(e_2)=n\ord(ne_2)=nm=\exp(G)$. 
	Since $\supp(U)\subset \la e_1,e_2\ra$, we obtain that 
	$|U|=\mathsf D_k(G)\le \mathsf D_k(\la e_1,e_2\ra)\le \mathsf D_k(G)$. It follows from Lemma \ref{le-subgroup} that $G=\langle e_1,e_2\rangle$ and hence $(e_1,e_2)$ is a generating set of $G$ with $\ord(e_1)>n$ and $\ord(e_2)=mn$.

	If $s=0$, then  $U$ has the form of type III by writing $e_2$ as $-0e_1+e_2$.
	Suppose $s\geq 1$. Then, in view of \eqref{reeper}, there is some block $W_i=e_1^n$ with $i\in [1,km-1]$, while there is some block $W_j=e_2^n$ with $j\in [1,km-1]$. By \ref{sch-A1}.1, we have $ne_2=\sigma(W_j)=\sigma(W_i)=ne_1$. 
	Since $\ord(e_1)$ is a multiple of $n$, we have $\ord(e_1)=n\ord(ne_1)=n\ord(ne_2)=mn$. Note that $(e_1, e_2-e_1)$ is a generating set of $G$. It follows from $n(e_2-e_1)=0$ that $(e_2-e_1, e_1)$ is a basis of $G$. Letting $f_1=e_1$ and $f_2=e_2-e_1$, we have 
	\[
	U=f_1^{(s+1)n-1}(f_1+f_2)^{(km-s-1)n}\prod_{i=1}^n((1-x_i)f_1+f_2)\quad \text{ with }s+1\in [2, km-1]
	\]
	and hence   $U$ has the form of type IV by writing $f_1+f_2$ as $(1-0)f_1+f_2$.
\end{proof}

\end{document}